\newtheorem{thm}{Theorem}[section]
\newtheorem{assumption}[thm]{Assumption}
\newtheorem{cor}[thm]{Corollary}
\newtheorem{lem}[thm]{Lemma}
\newtheorem{prop}[thm]{Proposition}
\theoremstyle{definition}
\newtheorem{defn}[thm]{Definition}
\theoremstyle{rem}
\newtheorem{rem}[thm]{Remark}
\numberwithin{equation}{section}
\def\high{1}
\def\low{0}
\def\p0{p_0}
\def\p1*{p^{*}}
\def\pp1*{p'}
\def\t1eps{t^{\epsilon}_1}
\def\fracp0{\frac{1-p_0}{p_0}}
\def\fracp00{\frac{p_0}{1-p_0}}
\def\fracp1*{\frac{\p1*}{1-\p1*}}
\def\vg{\bar{\nu}_{\high}}
\def\vvg{\bar{\nu}_{\low}}
\def\vi{\nu_{\high}}
\def\vvi{\nu_{\low}}
\def\vh{\nu_{\high} (dh)}
\def\vvh{\nu_{\low} (dh)}
\def\Ph{P^h}
\def\F{\mathcal{F}}
\def\KK{\textbf{K}}
\def\KR{K_R}
\def\KS{K_S}
\def\K{K}
\def\key{k}
\renewenvironment{proof}[1][Proof]{\textbf{#1.} }{\ \rule{0.5em}{0.5em}}
\begin{document}




\title{Bandit problems with L\'{e}vy processes}

\author{Asaf Cohen}\thanks{The School of Mathematical Sciences, Tel-Aviv University, Tel-Aviv 69978, Israel,
cohenasa@post.tau.ac.il, web: https://sites.google.com/site/asafcohentau/} 
\author{Eilon Solan}\thanks{The School of Mathematical Sciences, Tel-Aviv University, Tel-Aviv 69978, Israel, eilons@post.tau.ac.il, web: www.math.tau.ac.il/$\sim$eilons}

\begin{abstract}
Bandit problems model the trade-off between exploration and
exploitation in various decision problems. We study two-armed bandit
problems in continuous time, where the risky arm can have two types:
High or Low; both types yield stochastic payoffs generated by a
L\'{e}vy process. We show that the optimal strategy is a cut-off
strategy and we provide an explicit expression for the cut-off and
for the optimal payoff.
\end{abstract}


\keywords{Two-armed bandit, L\'{e}vy processes, cut-off strategies.}

\maketitle

%

\section{Introduction}\label{s:introduction}

A variation of this model was studied in filtering theory by Kalman
and Bucy (1961) \cite{Kalman1961} and Zakai (1969) \cite{Zakai1969}.
They analyze a more general model where a decision maker observes a
function of a diffusion process with an additional noise, which is
formulated as a Brownian motion. They provide equations that the
posterior or the unnormalized posterior distribution at time $t$
satisfies. Bandit problems, first described in Robbins (1952)
\cite{Robbins}, are a mathematical model for studying the trade
between exploration and exploitation. In its simplest formulation, a
decision maker (DM) faces $N$ slot machines (called \emph{arms}) and
has to choose one of them at each time instance. Each slot machine
delivers a reward when and only when chosen. The reward's
distribution of each slot machine is drawn according to an unknown
distribution, which itself is drawn according to a known probability
distribution from a set of known distributions. The DM's goal is to
maximize his total discounted payoff. The trade-off that the DM
faces at each stage is between exploiting the information that he
already has, that is, choosing the arm that looks optimal according
to his information, and exploring the arms, that is, choosing a
suboptimal arm to improve his information about its payoff
distribution. A good strategy for the DM will involve phases of
exploration and phases of exploitation. In exploration phases the DM
samples the rewards of the various machines and learns their
rewards' distributions. In exploitation phases the DM samples the
machine whose reward's distribution so far is best until evidence
shows that its reward's distribution is not as good as expected.

Bandit problems have been applied to various areas, like economics,
control, statistics, and learning; see, e.g., Chernoff (1972)
\cite{Chernoff}, Rothschild (1974) \cite{Rothschild}, Weitzman
(1979) \cite{Weitzman}, Roberts and Weitzman (1981) \cite{Roberts},
Lai and Robbins (1984) \cite{Lai}, Bolton and Harris (1999)
\cite{Bolton}, Moscarini and Squintani (2010) \cite{Moscarini},
Keller, Rady, and Cripps (2005) \cite{Keller2005}, Bergemann and
V\"{a}lim\"{a}ki (2006) \cite{Bergemann}, Besanko and Wu (2008)
\cite{Besanko}, and Klein and Rady (2011) \cite{Klein}.

Gittins and Jones (1979) \cite{Gittins} proved that in discrete time
the optimal strategy of the DM has a particularly simple form: at
every period the DM calculates for each arm an index, which is a
real number, based on past rewards of that arm, and chooses the arm
with the highest index. It turns out that to calculate the index of
an arm it is sufficient to consider an auxiliary problem with two
arms: an arm for which the index is calculated and an arm that
yields a constant payoff. The former arm is termed the \emph{risky}
arm, because its payoff distribution is not known, while the latter
is termed the \emph{safe} arm. The literature therefore focuses on
such problems, called two-armed bandit problems.\footnote{In the
literature these problems are also called one-armed bandit
problems.}

Once the optimality of the index strategy is guaranteed, one looks
for the relation between the data of the problems and the index.
Explicit formulas for the index when the payoff is one of two
distributions that have a simple form have been established in the
literature. Berry and Friestedt (1985) \cite{Berry} provide the
solution to the problem in several cases, e.g., in discrete time
when the payoff distribution is one of two Bernoulli distributions,
and in continuous time when the payoff distribution is one of two
Brownian motions. By studying the dynamic programming equation that
describes the problem in continuous time, Keller, Rady, and Cripps
(2005) \cite{Keller2005} and Keller and Rady (2010)
\cite{Keller2010} provided an explicit form for the index when time
is continuous and the payoff's distribution
is Poisson.%
\footnote{These authors also studied the strategic setup in which
several DMs have the same set of arms and their arms' payoff
distributions are the same (and unknown), and they compared the
cooperative solution to the non-cooperative solution.}

In the present paper we study two-armed bandit problems in
continuous time and provide an explicit solution when the payoff
distribution of the risky arm is one of two L\'{e}vy processes. We
assume that one distribution, called {\em High}, dominates the
other, called {\em Low}, in a strong sense (see Assumption
\ref{assumption} below). To eliminate trivial cases, we assume that
the expected payoff generated by the safe arm is lower than the
expected payoff generated by the High distribution, and higher than
the expected payoff generated by the Low distribution.

In discrete time, under these assumptions the optimal strategy is a
cut-off strategy: the DM keeps experimenting as long as the
posterior belief that the distribution is High is higher than some
cut-off point, and, once the posterior probability that the
distribution is High falls below the cut-off point, the DM switches
to the safe arm forever. We extend this result to our setup, and
prove that when the two payoff distributions are L\'{e}vy processes
that satisfy several requirements, the optimal strategy is a cut-off
strategy. Moreover, we provide an explicit expression for the
cut-off point in terms of the data of the problem. When
particularized to the models studied by Bolton and Harris (1999)
\cite{Bolton}, Keller, Rady, and Cripps (2005) \cite{Keller2005},
and Keller and Rady (2010) \cite{Keller2010} our expression reduces
to the expressions that they obtained.

Apart from unifying previous results, our characterization shows
that the special form of the optimal payoff derived by Bolton and
Harris (1999) \cite{Bolton} and Keller, Rady, and Cripps (2005)
\cite{Keller2005} is valid in a general setup: the optimal payoff is
the sum of the expected payoff, if no information is available, and
an option value that measures the expected gain from the ability to
experiment. It also shows that the data of the problem can be
divided into information-relevant parameters and payoff-relevant
parameters; the information-relevant parameters can be summarized in
a single real number, and the payoff-relevant parameters are the
expectations of the processes that contribute to the DM's payoff.
Finally, the characterization allows one to derive comparative
statics on the optimal cut-off and payoff. For example, as the
discount rate increases, or the signals become less informative, the
cut-off point increases and the DM's optimal payoff decreases.

The rest of the paper is organized as follows. In Section \ref{s:the
model} we present the model, the types of strategies that we allow,
and the assumptions that the payoff process should satisfy. In
Section \ref{s:posterior} we define the process of posterior belief
and we develop its infinitesimal generator. In Section \ref{s:value}
we present the value function, and in Section \ref{s:HJB} we present
the Hamilton--Jacobi--Bellman (HJB) equation. The main result, which
characterizes the optimal strategy and the optimal payoff of the DM,
is formulated and proved in Section \ref{s:main}. The Appendix
contains the proofs of several results that are needed in the paper.

\section{The model}\label{s:the model}
\subsection{Reminder about L\'{e}vy processes}\label{s:reminder}
L\'{e}vy processes are the continuous-time analog of discrete-time
random walks with i.i.d.~increments. A \emph{L\'{e}vy process}
$X=(X(t))_{t\geq 0}$ is a continuous-time stochastic process that
(a) starts at the origin: $X(0)=0$, (b) admits c\`{a}dl\`{a}g
modification,\footnote{That is, it is continuous from the right, and
has limits from the left: for every $t_0$, the limit $X(t_0
-):=\underset{t\nearrow t_0}{\lim}X(t)$ exists a.s. and $X(t_0)
=\underset{t\searrow t_0}{\lim}X(t)$. } and (c) has stationary
independent increments. Examples of L\'{e}vy processes are a
Brownian motion, a Poisson process, and a compound Poisson process.

Let $(X(t))$ be a L\'{e}vy process. For every Borel measurable set
$A\subseteq \mathbb{R}\backslash \{0\}$, and every $t\geq 0$, let
the Poisson random measure $N(t ,A)$ be the number of jumps of
$(X(t))$ in the time interval $[0,t]$ with jump size in $A$:
\[
N(t,A)=\sharp\{0\leq s \leq t \mid \Delta X(t): = X(s) - X(s-) \in
A\}.
\]
By Applebaum (2004) \cite{Applebaum}, one can define a Borel measure
$\nu$ on $\mathcal{B}(\mathbb{R}\backslash\{0\})$ by
\[\nu (A) := E[N(1,A)] = \int N(1,A)(\omega)dP(\omega),\] where $(\Omega,P)$ is the underlying probability space.
The measure $\nu (A)$ is called the \emph{L\'{e}vy measure} of
$(X(t))$, or the \emph{intensity measure} associated with $(X(t))$.

We now present the L\'{e}vy--It\={o} decomposition of L\'{e}vy
processes. Let $(X(t))$ be a L\'{e}vy process; then there exists a
constant $b\in \mathbb{R}$, a Brownian motion $\sigma Z(t)$ with
standard deviation $\sigma$, and an independent Poisson random
measure $N_{\nu}(t,dh)$ with the associated L\'{e}vy measure $\nu$
such that, for each $t\geq 0 $,
\begin{equation}\notag
X(t) = b t +\sigma Z(t) + \underset{h>|1|}{\int} hN_{\nu}(t,dh) +
\underset{h \leq |1|}{\int} h\widetilde{N}_{\nu}(t,dh),
\end{equation}
where $\widetilde{N}_{\nu} (t,A) := N_{\nu}(t,A)-t\nu (A)$ is the
\emph{compensated Poisson random measure}. This representation is
called the \emph{L\'{e}vy--It\={o} decomposition} of the L\'{e}vy
process $(X(t))$. Thus, a L\'{e}vy processes is characterized by the
triplet $\langle b,\sigma, \nu \rangle$.

If the L\'{e}vy process has finite expectation for each $t$, that
is, $E|X(t)|<\infty$ for all $t\geq 0 $, then the L\'{e}vy process
can be represented as
\begin{equation}\notag
X(t) = \mu t +\sigma Z(t) + \underset{\mathbb{R}\backslash \{0\}
}{\int} h\widetilde{N}_{\nu}(t,dh) ;
\end{equation}
that is, $X(t)$ can be represented as the sum of a linear drift, a
Brownian motion, and an independent purely discontinuous
martingale\footnote{A \textit{purely discontinuous process} is a
process that is orthogonal to all continuous local martingales. For
details, see Jacod and Shiryaev (1987, Ch.~I, Definition 4.11)
\cite{Jacod}.} (see Sato (1999, Theorem 25.3) \cite{Sato1987}).
\begin{rem} Even though the process $(X(t))$ has finite expectation, it is possible that
\[
E\left[\underset{\mathbb{R}\backslash \{0\}}{\int}
|h|N_{\nu}(t,dh)\right]= \infty,
\]
which means that the expectation of the sum of the jumps of $X(t)$
in any time interval is infinite.
\end{rem}

\subsection{L\'{e}vy bandits}\label{s:finite}
A DM operates a two-armed bandit machine in continuous time, with a
safe arm that yields a constant payoff $\varrho$, and a risky arm
that yields a stochastic payoff $(X(t))$ that depends on its type
$\theta$. The risky arm can be of two types, High or Low. With
probability $p_0=p$ the arm's type is High, and with probability
$1-p$ it is Low. If the type is High (resp. Low) we set $\theta = 1$
(resp. $0$). The process $(X(t))$ is a L\'{e}vy process with the
triplet $\langle\mu_\theta,\sigma,\nu_\theta\rangle$; that is, the
L\'{e}vy--It\={o} decomposition of $(X (t))$ is $X(t) = \mu_\theta t
+ \sigma_\theta Z(t) + \underset{\mathbb{R}\backslash \{0\} }{\int}
h\widetilde{N}_{\nu_\theta}(t,dh) $. Formally, for
$\theta\in\{0,1\}$, let $(X_\theta (t))$ be a L\'{e}vy process with
triplet $\langle\mu_\theta,\sigma,\nu_\theta\rangle$ and let
$\theta$ be an independent Bernoulli random variable with parameter
$p$. The process $(X(t))$ is defined to be $(X_\theta(t))$.
We denote by $P_p$ the probability measure over the space of
realized paths that corresponds to this description.
From now on, unless mentioned otherwise, all the expectations are
taken under the probability measure $P_p$.
%

\subsection{Strategies}\label{s:strategies}
We adopt the concept of continuous-time strategies first introduced
by Mandelbaum, Shepp, and Vanderbei (1990) \cite{Mandelbaum}. An
\emph{allocation strategy} $\KK= \{ \KK (t)\mid t\in[0,\infty)\}$ is
a nonnegative stochastic process $\KK(t)=(\KR(t),\KS(t))$ that
satisfies
\begin{align}\tag{K1}\label{T1}
&\KR(0) = \KS(0) = 0, \ \textrm{and}\ (\KR(t)) \;\textrm{and}\
(\KS(t))\ \textrm{are nondecreasing processes},\\\tag{K2}\label{T2}
&\KR(t)+\KS(t)=t, \;\; t\in[0,\infty), and \\\tag{K3}\label{T3}
&\{\KR(t)\leq s\} \in \F_s^X, \; \; t,s\in[0,\infty),\notag
\end{align}
where $\F_s^X$ is the sigma-algebra generated by the process
$(X(t))_{t\leq s}$. The interpretation of an allocation process is
that the quantity $\KR(t)$ (resp.~$\KS(t)$) is the time that the DM
devotes to the risky arm (resp.~safe arm) during the time interval
$[0,t)$. The process $(\KK(t))$ is basically a two-parameter time
change of the two-dimensional process $(X(t),\varrho t)$.

Below we will define a stochastic integral with respect to
$(X_\theta(t))$, and therefore we assume throughout that both
L\'{e}vy processes $ (X_{1} (t))$ and $(X_{0} (t))$ have finite
quadratic variation, that is, $E[X_\theta^2(t)]<\infty $ for every
$t\geq 0$ and each $\theta\in\{0,1\}$. It follows that the processes
$(X_\theta(t))$ have finite expectation.
\begin{assumption}$\\$\label{finite_quadratic}
A1. $E[X^2_\theta (1)]=\mu_\theta^2 +\sigma^2 +{\int}
h^2\nu_\theta(dh)<\infty.$
\end{assumption}

For every $(t,p)\in [0,\infty)\times[0,1]$, every real-valued
function $S:\mathbb{R}\rightarrow \mathbb{R}$, and every pair of
Markov processes $(H_1(t))$ and $(H_2(t))$ with respect to the
filtration $(\F_t^X)_{t\geq 0}$ under both $P_0$ and $P_1$ such that
$E \left[S(\int_t^{\infty} H_1(s) dH_2(s))\mid \theta \right]$ are
well defined for both $\theta\in\{0,1\}$, we define the following
expectation operator:
\begin{align}
\notag E^{t,p} \left[S\left(\int_t^{\infty} H_1(s)
dH_2(s)\right)\right]:&= \left.p E \left[S\left(\int_t^{\infty}
H_1(s) dH_2(s)\right)\right|  \theta =
1\right]\\
\label{Etp} &+\left.(1-p) E \left[S\left(\int_t^{\infty} H_1(s)
dH_2(s)\right)\right| \theta = 0\right].\\\notag
\end{align}

Using this notation, the expected discounted payoff from time $t$
onwards under allocation strategy $\KK$ when the prior belief at
time $t$ is $p_t =p$ can be expressed as
\begin{align}\label{VKtp}
V_{\KK}(t,p) &:= E^{t,p} \left[\int_t^{\infty} r
e^{-rs}dY(\KK(s))\right],
\\\notag
\end{align}
where $Y(\KK(s)):=X(\KR(s))+\varrho \KS(s)$. The goal of the DM is
to maximize $V_{\KK}(0,p)$. Let
\begin{align}\label{defineU}
U(t,p) := \underset{\KK}{\sup}V_{\KK}(t,p)\\\notag
\end{align}
be the maximal payoff the DM can achieve from time $t$ onwards,
given that the prior belief at time $t$ is $p_t =p$. As we show in
Theorem \ref{Up1} below, under proper assumptions the DM has an
optimal strategy, so in fact the supremum in Eq. (\ref{defineU}) is
achieved. Moreover, we give explicit expressions for both the
optimal strategy and the optimal value function $U(t,p)$.

\begin{rem}\label{key}
By Conditions (\ref{T1}) and (\ref{T2}), $\KR$ and $\KS$ are
Lipschitz and thus absolutely continuous. Therefore, there exists a
two-dimensional stochastic process $\KK'(t)=\dfrac{d\KK}{dt}(t) =
(\KR'(t), \KS'(t))$ such that $\KK(t)=\int_0^t \KK'(s) ds$. To
simplify notation we denote $\K(t):=\KR (t)$, and $\key(t)
:=\K_R'(t)$. Hence, $\KK(t)=(\K(t),t-\K(t))$ and
$\KK'(t)=(\key(t),1-\key(t))$. The process $(\key(t))$ may be
interpreted as follows: At each time instance $t$, the DM chooses
$\key(t)$ (resp. $ 1 - \key(t)$), the proportion of time in the
interval $[t,t+dt)$ that is devoted to the risky arm (resp. the safe
arm). The process $(\key(t))$ will be treated as a stochastic
control parameter of the process $(X(t))$. Denote by $\F_{\K(t)}$
the sigma-algebra generated by $(X(\K(s)))_{s\leq t}$.
\end{rem}

\begin{defn}
An \emph{admissible control strategy} $(\key(t,\omega))$ is any
predictable process such that $0\leq \key\leq 1$ with probability
$1$, and such that the process $\K(t)=\int_0^t \key(s) ds$
satisfies\footnote{Since $0\leq \key\leq 1$, it follows that $\K(t)$
satisfies Conditions (\ref{T1}) and (\ref{T2}) as well.} Condition
(\ref{T3}). Denote by $\Upsilon$ the set of all admissible control
strategies.
\end{defn}

In the sequel we will not distinguish between the allocation
strategy $(\K(t))$ and the corresponding admissible control strategy
$(\key(t))$.

\subsection{Assumptions}
If the DM could deduce the type of the risky arm by observing the
payoff of the risky arm in an infinitesimal time interval, then an
almost-optimal strategy is to start at time $0$ with the risky arm,
and switch at time $\delta$ to the safe arm if the type of the risky
arm is Low, where $\delta>0$ is a small real number. Throughout the
paper we make the following assumption, which implies that the DM
cannot distinguish between the two types in any infinitesimal time.

\begin{assumption}\label{assumption}$\\$
A2. $\sigma_{\high}=\sigma_{\low}$.\\
A3. $|\nu_1(\mathbb{R}\setminus \{0\}) - \nu_0(\mathbb{R}\setminus \{0\})| < \infty$.\\
A4. $|\int h (\nu_1 (dh) - \nu_0 (dh))|<\infty$.\\
\end{assumption}

Assumption A2 states that the Brownian motion component of both the
High type and the Low type have the same standard deviation. By
Revuz and Yor (1999, Ch.~I, Theorem 2.7) \cite{Revuz} the realized
path reveals the standard deviation and therefore if Assumption A2
does not hold then the DM can distinguish between the arms in any
infinitesimal time interval. Assumption A3 states that the
difference between the L\'{e}vy measures is finite and Assumption A4
states that the difference between the expectation of the jump part
of the processes is finite. Otherwise, by comparing the jump part of
the processes, the DM could distinguish between the arms in any
infinitesimal time interval.

We also need the following assumption, which states that the High
type is better then the Low type in a strong sense.

\begin{assumption}\label{assumption}$\\$
A5. $\mu_0<\varrho<\mu_1$.\\
A6. For every $A\in\mathcal{B}(\mathbb{R}\setminus \{0\}),\; \; \vvi
(A)\leq\vi (A)$.
\end{assumption}

Assumption A5 merely says that the High (resp. Low) type provides
higher (resp. lower) expected payoff than the safe arm. Assumption
A6 is less innocuous; it requires that the L\'{e}vy measure of the
High type dominates the L\'{e}vy measure of the Low type in a strong
sense. Roughly, jumps of any size $h$, both positive and negative,
occur more often (or at the same rate) under the High type than
under the Low type. A consequence of this assumption is that jumps
always provide good news, and (weakly) increase the posterior
probability of the High type.

\begin{rem}\label{moments}
Although we require that the zeroth and first moments of $(\nu_1
(dh) - \nu_0 (dh))$ are finite (Assumptions A3 and A4), this
requirement is not made for moments of higher order, since, by
Assumption \ref{finite_quadratic}, $\int_{\mathbb{R}\setminus \{0\}}
h^2\nu_\theta(dh)\leq \mu_\theta^2+\sigma^2 +
\int_{\mathbb{R}\setminus \{0\}}h^2\nu_\theta(dh) =
E[X^2_\theta(1)]<\infty$, for $\theta\in \{0,1\}$.
\end{rem}
\section{The posterior belief}
\label{s:posterior}
\subsection{Motivation}
At time $t=0$ the type $\theta$ is chosen randomly with $P(\theta=1)
= 1 - P(\theta=0) = p$. The DM does not observe $\theta$, but he
knows the prior $p$ and observes the controlled process
$(X(\K(t)))$. Let $p_t:=P(\theta=1\mid \F_{\K(t)})$ be the posterior
belief at time $t$ that the risky arm's type is High under the
allocation strategy $(\KK(t))$. The following proposition asserts
that the payoff $V_\KK (t,p)$ can be expressed solely by
the\footnote{$ p_{t-}$ is the posterior belief at time $K(t)-$.}
posterior process $(p_{t-})$ and the allocation strategy $(\KK(t))$.
This representation motivates the investigation of the posterior
process.

\begin{prop}\label{replacep}
For every allocation strategy $\KK$,
\begin{align}\notag
V_\KK(t,p) &= E^{t,p} \left[\int_t^{\infty} r e^{-rs} [(\mu_1 p_{s-}
+\mu_0 (1-p_{s-}) )\key(s) + \varrho(1-\key(s))] ds\right].
\\\notag
\end{align}
\end{prop}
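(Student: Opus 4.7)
The plan is to plug the L\'evy--It\={o} decomposition of $(X_\theta(t))$ into $dY(\KK(s))$, show that the Brownian and compensated-jump contributions vanish in expectation, and then replace the unknown drift $\mu_\theta$ by its posterior mean given the DM's information at time $s-$. Using Remark \ref{key} I would first split
$$\int_t^\infty re^{-rs}\,dY(\KK(s)) = \int_t^\infty re^{-rs}\,dX(\KR(s)) + \varrho\int_t^\infty re^{-rs}(1-\key(s))\,ds,$$
since $\KS(s)=s-\KR(s)$ is absolutely continuous with derivative $1-\key(s)$.

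Conditional on $\theta$, the L\'evy--It\={o} decomposition reads
$$dX_\theta(\KR(s)) = \mu_\theta\key(s)\,ds + \sigma\,dZ(\KR(s)) + \int_{\mathbb{R}\setminus\{0\}} h\,d\widetilde{N}_{\nu_\theta}(\KR(s),dh).$$
I would argue that the last two terms, weighted by the bounded, integrable factor $re^{-rs}$, are stochastic integrals against a time-changed Brownian motion and a time-changed compensated Poisson random measure. By the Dambis--Dubins--Schwarz theorem (applied to $Z\circ\KR$ with the absolutely continuous clock $\KR$ driven by $0\le\key\le 1$), the analogous time-change result for compensated Poisson measures, and Assumption \ref{finite_quadratic} which supplies $\int h^2\,\nu_\theta(dh)<\infty$, these are genuine square-integrable martingales with zero expectation under each $P_\theta$. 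Because $E^{t,p}[\,\cdot\,]$ is a convex combination of $E[\,\cdot\,\mid\theta=1]$ and $E[\,\cdot\,\mid\theta=0]$, both terms vanish.

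What remains is $E^{t,p}\!\left[\int_t^\infty re^{-rs}\mu_\theta\key(s)\,ds\right]$. Fubini's theorem applies since $\int_t^\infty re^{-rs}\,ds=e^{-rt}<\infty$ and $|\mu_\theta\key(s)|\le\max\{|\mu_0|,|\mu_1|\}$. Conditioning on $\F_{\KR(s-)}$, using predictability of $\key$ and the defining identity $p_{s-}=P(\theta=1\mid\F_{\KR(s-)})$, the tower property yields
$$E^{t,p}[\mu_\theta\key(s)] = E^{t,p}\!\left[\key(s)\,E[\mu_\theta\mid\F_{\KR(s-)}]\right] = E^{t,p}\!\left[\key(s)\bigl(\mu_1 p_{s-}+\mu_0(1-p_{s-})\bigr)\right].$$
Combining with the safe-arm contribution $\varrho(1-\key(s))$ produces the claimed representation.

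The principal obstacle is to make the martingale step precise when the clock $\KR$ is driven by an arbitrary admissible control $\key$: one must check that the time-changed Brownian and compensated-jump integrals remain (local) martingales in the DM's observational filtration $(\F_{\KR(t)})$, and then upgrade local martingale to true martingale with zero mean using the integrability supplied by $re^{-rs}$ together with the second-moment bound in Assumption \ref{finite_quadratic}.
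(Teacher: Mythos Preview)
Your proposal is correct and follows essentially the same route as the paper. The paper packages the key step---replacing $dX(\KR(s))$ by its $P_p$-compensator $(\mu_1 p_{s-}+\mu_0(1-p_{s-}))\,d\KR(s)$---as a separate appendix result (Corollary \ref{martingales}, part C1, via Lemma \ref{compensator}), whose proof is exactly your tower-property computation carried out under $P_p$; and it is slightly more explicit than you are about the improper integral, proving $L^2$ convergence of $\int_t^x re^{-rs}\,dX(\KR(s))$ via the It\={o} isometry for time-changed L\'evy integrals (citing Kobayashi (2011)), whereas you sketch this as ``upgrade local martingale to true martingale using $re^{-rs}$ and Assumption \ref{finite_quadratic}.'' The only cosmetic difference is that you condition on $\theta$ first and average afterwards, while the paper's lemma works directly under $P_p$; the underlying computation is identical.
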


\begin{proof}
We will prove the following series of equations, which proves the
claim:
\begin{align}\label{11}
E^{t,p}&\left[\int_t^\infty re^{-rs}dY(\KK(s))\right]= E^{t,p}\left[
\underset{x\rightarrow \infty}{\lim}\int_t^x
re^{-rs}dY(\KK(s))\right] \\\label{12} &= \underset{x\rightarrow
\infty}{\lim}E^{t,p}\left[\int_t^x re^{-rs}dY(\KK(s))\right]
\\\label{13} &= \underset{x\rightarrow \infty}{\lim}E^{t,p}\left[\int_t^x
re^{-rs}[(\mu_1 p_{s-} +\mu_0 (1-p_{s-}) )\key(s) +
\varrho(1-\key(s))] ds\right]\\\label{14} &=
E^{t,p}\left[\underset{x\rightarrow \infty}{\lim}\int_t^x
re^{-rs}[(\mu_1 p_{s-} +\mu_0 (1-p_{s-}) )\key(s) +
\varrho(1-\key(s))] ds\right]\\\label{15} &=
E^{t,p}\left[\int_t^\infty re^{-rs}[(\mu_1 p_{s-} +\mu_0 (1-p_{s-})
)\key(s) + \varrho(1-\key(s))] ds\right].\\\notag
\end{align}
Eqs.~(\ref{11}) and (\ref{15}) hold by the definition of the
improper integral. Let $\textit{[}X(\K(s))\textit{]}$ be the
quadratic variation of the time-changed process $(X(\K(s)))$. From
the It\={o} isometry and Kobayashi (2011, pages 797--799)
\cite{Kobayashi}, it follows that
\begin{align}\notag
&E^{t,p} \left[\int_t^\infty re^{-rs}dX(\K(s)) - \int_t^x
re^{-rs}dX(\K(s))   \right]^2\\\notag &= E^{t,p} \left[\int_x^\infty
re^{-rs}dX(\K(s)) \right]^2  = E^{t,p} \left[\int_x^\infty
(re^{-rs})^2 d\textit{[}X(\K(s))\textit{]} \right]  \\\notag & =
E\left.\left[\int_x^\infty (re^{-rs})^2
d\textsl{[}X(\K(s))\textsl{]} \right|\theta = 1 \right] p + E
\left.\left[\int_x^\infty (re^{-rs})^2 d\textit{[}X(\K(s))\textit{]}
\right|\theta = 0 \right] (1-p) \\\notag &=E
\left.\left[\int_x^\infty (re^{-rs})^2 c_1 d\K(s) \right|\theta = 1
\right] p + E \left.\left[\int_x^\infty (re^{-rs})^2 c_0 d\K(s)
\right|\theta = 0 \right] (1-p), \\\notag
\end{align}
where $c_\theta =E[X^2_\theta(1)]= \mu^2_\theta +\sigma^2 +
\int_{\mathbb{R}\setminus \{0\}}h^2\nu_\theta(dh)$, for
$\theta\in\{0,1\}$. Hence, $\int_t^x re^{-rs}dX(\K(s))$ convergence
to $\int_t^\infty re^{-rs}dX(\K(s))$ in $L^2$ and Eq.~(\ref{12})
follows. Eq.~(\ref{13}) follows from Corollary \ref{martingales}
(part C1) in the appendix. Eq.~(\ref{14}) follows from the dominated
convergence theorem, since for every $x\geq t$,
\begin{align}\notag
\left| \int_t^x re^{-rs}[(\mu_1 p_{s-} +\mu_0 (1-p_{s-}) )\key(s) +
\varrho(1-\key(s))] ds\right|\leq \max\{|\mu_0|, |\mu_1|\} .
\end{align}
\end{proof}

\subsection{Formal definition of the posterior belief}\label{s:formal_definition}
An elegant formulation of the Bayesian belief updating process was
presented by Shiryaev (1978, Ch.~4.2) \cite{Shiryaev} to a model in
which the observed process is a Brownian motion with unknown drift
and extended later to a model in which the observed process is a
Poisson process with unknown rate in Peskir and Shiryaev (2000)
\cite{Peskir2000}.\footnote{Similar work has been done in the
disorder problem; see, e.g., Shiryaev (1978) \cite{Shiryaev}, Peskir
and Shiryaev (2002) \cite{Peskir2002}, and Gapeev (2005)
\cite{Gapeev}. } We follow this formulation and extend it to the
time-changed L\'{e}vy process. For every $p\in[0,1]$, the
probability measure $P_p$ satisfies $ P_p  = p P_1+(1-p)P_0.$ An
important auxiliary process is the Radon--Nikodym density, given by
\begin{align}\notag
\varphi_t := \frac{d(P_0\mid\F_{\K(t)})}{d(P_1\mid \F_{\K(t)})},
\;\;t\in[0,\infty).
\end{align}
\begin{lem}
For every $t\in[0,\infty)$,
\begin{align}\notag
p_t = \frac{p}{p +(1-p)\varphi_t}.
\\\notag
\end{align}
\end{lem}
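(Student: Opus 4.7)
The plan is to identify $p_t$ via a Bayes-rule / change-of-measure argument on the filtration $\F_{\K(t)}$, using the mixture representation $P_p = p P_1 + (1-p) P_0$ stated just above the lemma and the defining property $\varphi_t = d(P_0|_{\F_{\K(t)}})/d(P_1|_{\F_{\K(t)}})$.

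First, I would observe that under $P_1$ we have $\theta = 1$ almost surely, and under $P_0$ we have $\theta = 0$ almost surely. Hence for every $A\in\F_{\K(t)}$,
\begin{equation}\notag
P_p(\{\theta=1\}\cap A) = p P_1(\{\theta=1\}\cap A) + (1-p) P_0(\{\theta=1\}\cap A) = p P_1(A).
\end{equation}
Next, I would compute the Radon--Nikodym density of $P_p$ with respect to $P_1$ on $\F_{\K(t)}$: for any $A\in\F_{\K(t)}$,
\begin{equation}\notag
P_p(A) = p P_1(A) + (1-p) P_0(A) = \int_A \bigl(p + (1-p)\varphi_t\bigr)\, dP_1,
\end{equation}
so that $d(P_p|_{\F_{\K(t)}})/d(P_1|_{\F_{\K(t)}}) = p + (1-p)\varphi_t$. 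Note that since $\varphi_t \ge 0$ and $p\in(0,1)$, this density is bounded below by $p>0$, so division by it is harmless; the boundary cases $p=0$ and $p=1$ are immediate.

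Finally, I would use the characterization of $p_t$ as the unique (up to $P_p$-null sets) $\F_{\K(t)}$-measurable random variable satisfying $\int_A p_t\, dP_p = P_p(\{\theta=1\}\cap A)$ for every $A\in\F_{\K(t)}$. Using the two displays above and changing measure from $P_p$ to $P_1$,
\begin{equation}\notag
\int_A \frac{p}{p + (1-p)\varphi_t}\, dP_p = \int_A \frac{p}{p + (1-p)\varphi_t}\bigl(p + (1-p)\varphi_t\bigr)\, dP_1 = p P_1(A) = P_p(\{\theta=1\}\cap A).
\end{equation}
Since the right-hand side of the candidate formula is $\F_{\K(t)}$-measurable, uniqueness of the Radon--Nikodym derivative gives $p_t = p/(p + (1-p)\varphi_t)$, $P_p$-a.s.

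There is essentially no hard step here: the only thing to be careful about is making sure the restriction of the measures to $\F_{\K(t)}$ (rather than the full sigma-algebra) is what is used, and confirming that under $P_1$ and $P_0$ the random variable $\theta$ is deterministic, so that the factor $p$ (rather than $p_t$) appears on the right after conditioning. Everything else is a direct application of the mixture formula and the definition of $\varphi_t$.
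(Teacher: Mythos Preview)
Your proof is correct and takes essentially the same approach as the paper's: both introduce the density $\pi_t = p\,d(P_1|_{\F_{\K(t)}})/d(P_p|_{\F_{\K(t)}}) = p/(p+(1-p)\varphi_t)$ and verify that it satisfies the defining property of the conditional expectation $p_t = E^p[\chi_{\{\theta=1\}}\mid\F_{\K(t)}]$ via the identity $P_p(\{\theta=1\}\cap A)=pP_1(A)$ for $A\in\F_{\K(t)}$. Your version is slightly more streamlined in that you work directly at a single time $t$, whereas the paper introduces an auxiliary $s\geq t$ before specializing, but the content is the same.
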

\begin{proof}
Define the following Radon--Nikodym density process
\begin{align}\notag
\pi_t = p\frac{d(P_1\mid\F_{\K(t)})}{d(P_p\mid\F_{\K(t)})},
\;\;t\in[0,\infty),
\end{align}
where $P_p(\cdot\mid\F_{\K(t)}) = p P_1(\cdot\mid\F_{\K(t)})+(1 - p)
P_0(\cdot\mid\F_{\K(t)})$. From the definition of $(\varphi_t)$ it
follows that $\pi_t = \frac{p}{p +(1-p)\varphi_t}$. Therefore, it is
left to prove that $p_t=\pi_t$ for every $t\in[0,\infty)$. Let
$A\in\F_{\K(s)}$ where $s\geq t$. The following series of equations
yields that $p_t=\pi_t$ for every $t\in[0,\infty)$:
\begin{align}\label{31}
E^p[\chi_A p_s|\F_{K(t)}] &= E^p[\chi_A
E^p[\chi_{\{\theta=1\}}|\F_{K(s)}]|\F_{K(t)}] \\\label{32}& =
E^p[\chi_{A\cap\{\theta=1\}} |\F_{K(t)}] \\\label{33}& = p
E^1[\chi_{A} |\F_{K(t)}] \\\label{34}& = E^p[\chi_{A}\pi_s
|\F_{K(t)}],
\end{align}
where $\chi_A=1$ if $A$ is satisfied and zero otherwise.
Eq.~(\ref{31}) follows from the definition of $p_t$. Eq.~(\ref{32})
follows since $s\geq t$, and, therefore,
$\F_{K(s)}\supseteq\F_{K(t)}$. Eq.~(\ref{33}) follows from the
definition of the probability measure $P_p$, and Eq.~(\ref{34})
follows from the property of the Radon--Nikodym density $\pi_t$.
\end{proof}


By Jacod and Shiryaev (1987, Ch.~III, Theorems 3.24 and 5.19)
\cite{Jacod}, the process $(\varphi_t)$ admits the following
representation:
\begin{align}\notag
\varphi_t =\exp \left\{ \beta\sigma Z(\K(t))+ (\vg-\vvg
-\frac{1}{2}\beta^2 \sigma^2) \K(t) + \underset{\mathbb{R}\setminus
\{0\}}{\int}\ln \left(\frac{\nu_0}{\nu_1}(h)\right)N(\K(t),dh)
\right\},\notag
\end{align}
where $\beta := \frac{\mu_0-\mu_1-\underset{\mathbb{R}\setminus
\{0\}}{\int}h(\nu_0-\nu_1)(dh)}{\sigma^2}$ and $\vg-\vvg :=
\underset{\mathbb{R}\setminus \{0\}}{\int}(\nu_1(dh)-\nu_0(dh))$. By
Assumption A6, $\vg-\vvg$ is finite and the Radon--Nikodym
derivative $\frac{\nu_0}{\nu_1}(h)$ exists.\footnote{To ensure the
existence of the Radon--Nikodym derivative one does not need the
full power of Assumption A6. Its full power will be used for the
proof of Theorem \ref{Up1}.}
\begin{rem}\label{Markovian}
1. Let $B_\infty\in \mathcal{B}(\mathbb{R}\setminus \{0\})$ be a
maximal set (up to $\nu_1 $-measure zero) such that
$\nu_1(B_\infty)\geq 0=\nu_0 (B_\infty)$. Occurrence of a jump from
$B_\infty$ indicates that the risky arm is High.
By definition, $\varphi_t=0$ after such a jump and therefore $p_t=1$.\\
2. By ignoring jumps from $B_\infty$, $(\ln(\varphi_t))$ is a
L\'{e}vy process\footnote{However, it is not a L\'{e}vy process
under $P_p$ for $0<p<1$, since it is not time-homogeneous.} with
time change $(\K(t))$, under both $P_0$ and $P_1$ with respect to
the filtration generated by $(\varphi_t)$, which coincides with
$(\F_{\K(t)})$. From the one-to-one correspondence between
$\varphi_t$ and $p_t$ it follows that $p_t$ is a Markov process.
Therefore, our optimal control problem falls in the scope of optimal
control of Markov processes. Hence, we can limit the allocation
strategies to \emph{Markovian allocation strategies}, or
equivalently to \emph{Markovian control strategies}, which we define
as follows.
\end{rem}
\begin{defn}
A control strategy $(\key(t,\omega))$ is \emph{Markovian} if it
depends solely on the Markovian process $(t,p_{t-})$. That is,
$\key(t,\omega) = \key(t,p_{t-})$. Denote by $\Upsilon_M$ the set of
all Markovian control strategies.
\end{defn}
\begin{rem}
A convenient way to understand the ``Girsanov style'' process
$(\varphi_t)$ is to examine the process $(X(t))$. We may assume that
\begin{align}\notag
&X(t)=\mu_1 t+\sigma Z(t) + \underset{\mathbb{R}\setminus
\{0\}}{\int}h \widetilde{N}_{\nu_1}(t,dh),
\end{align}
where, under $P_1$, $(Z(t))$ is a Brownian motion and the last term
is a purely discontinuous martingale. By the definition of $\beta$,
the same process can be represented as
\begin{align}\notag
&X(t)=\mu_0 t +\sigma (\beta\sigma t +Z(t)) +
\underset{\mathbb{R}\setminus \{0\}}{\int} h
\widetilde{N}_{\nu_0}(t,dh).
\end{align}
Under $P_0$ the process $(\beta\sigma t +Z(t))$ is a Brownian motion
and  the last term is a purely discontinuous martingale. For
details, see Jacod and Shiryaev (1987, Ch.~III) \cite{Jacod}.
\end{rem}

\subsection{The infinitesimal operator}
An important tool in the proofs is the \emph{infinitesimal operator}
of the process $(t,p_t)$ with respect to the Markovian control
strategy $\key$, which we will calculate in this section. The
infinitesimal operator (or infinitesimal generator) of a stochastic
process is the stochastic analog of a partial derivative (see
{\O}ksendal, 2000). In this section we calculate the infinitesimal
operator of the process $(t,p_t)$ with respect to the Markovian
control strategy $\key$, which we will use in the proof of Theorem
\ref{Up1}. By It\={o}'s formula (see, e.g., Kobayashi (2011), pages
797--799) \cite{Kobayashi}, the posterior process $(p_t)$ solves the
following stochastic differential equation:
\begin{align}\label{dP1}
dp_t = & ~[\beta^2\sigma^2(1-p_{t-})^2
p_{t-}-(\vg-\vvg)p_{t-}(1-p_{t-})]d\K(t) \\\notag
       & -p_{t-}(1-p_{t-})\beta\sigma dZ(\K(t)) \\\notag
       & +p_{t-}(1-p_{t-})\underset{h\in{\mathbb{R}\setminus \{0\}}}{\int}
\frac{1-\frac{\nu_0}{\nu_1}(h)}{p_{t-}+\frac{\nu_0}{\nu_1}(h)(1-p_{t-})}
N(d\K(t),dh)  \\\notag
      = & ~ p_{t-}(1-p_{t-})\left[ -\beta dM(\K(t))-(\vg-\vvg)d\K(t) \right.\\\notag
      & + \left.\underset{h\in{\mathbb{R}\setminus \{0\}}}{\int} \frac{1-\frac{\nu_0}{\nu_1}(h)}{p_{t-}+\frac{\nu_0}{\nu_1}(h)(1-p_{t-})}
         N(d\K(t),dh) \right]  ,\notag
\end{align}
where
%
%
\begin{align}\notag
M(\K(t)) &= X(\K(t)) -\underset{\mathbb{R}\setminus \{0\}}{\int} h
\widetilde{N}_{\nu_0}(\K(t),dh) - \mu_0\K(t)+\beta\sigma^2 \int_0^t
p_{s-} dK(s)\notag
\end{align}
is a martingale under $P_p$ with respect to $\F_{\K(t)}$; see
Corollary \ref{martingales} (part C2) in the appendix.

The first term on the right-hand side of ~(\ref{dP1}),
$-p_{t-}(1-p_{t-})\beta dM(\K(t))$, is the contribution of the
continuous part of the payoff process to the change in the belief,
while the second term, $- p_{t-}(1-p_{t-})(\vg-\vvg)d\K(t)$, is the
contribution of the fact that no jump occurred. This latter
contribution is negative due to Assumption A6. If a jump of size $h$
occurs during the interval $[t,t+dt)$, then the contribution of the
jump is $\Ph_t - p_{t-}$, where, $\Ph_t:=
\frac{p_{t-}\nu_1(dh)}{p_{t-}\nu_1(dh) + (1-p_{t-} ) \nu_0(dh)}$ is
the Bayesian update of the probability that the risky arm is High
given that a jump of size $h$ occurs. By Assumption A6, for every
$0< p<1 $ we have $P_p(  p_t < \Ph_t)=1$.

To calculate the infinitesimal operator of the process $(t,p_t)$
with respect to the Markovian control strategy $\key$ we apply
It\={o}'s formula\footnote{$C^{1,2}$ is the set of all functions
$f:[0,\infty)\times[0,1]\rightarrow \mathbb{R}$, which are $C^1$ in
their first coordinate, and $C^2$ in their second coordinate.} for
$f(t,p)\in C^{1,2}([0,\infty)\times[0,1])$ and obtain
\begin{align}\label{ito_for_f}
f(t,p_t)=& f(0,p_0)  + \int_0^t {f_t(s,p_{s-})ds}
                        + \int_0^t {f_p (s,p_{s-})dp_s}\\\notag
                        &+ \frac{1}{2}\int_0^t {f_{pp}(s,p_{s-}) p_{s-}^2(1-p_{s-})^2\beta^2\sigma^2d\K(s)}\\\notag
                        &+\sum_{s\leq t} \left[  f(s,p_s)- f(s,p_{s-})-f_p(s,p_{s-}) \Delta(p_s)  \right]\\\notag
                        =& f(0,p_0)  + \int_0^t {f_t(s,p_{s-})ds}\\\notag
                        &- \int_0^t {f_p (s,p_{s-})[ (\vg-\vvg)p_{s-}(1-p_{s-}) ]d\K(s)}\\\notag
                        &+ \frac{1}{2}\int_0^t {f_{pp}(s,p_{s-}) p_{s-}^2(1-p_{s-})^2\beta^2\sigma^2d\K(s)}\\\notag
                        &+ \int_{s=0}^t \int_{h\in{\mathbb{R}\setminus \{0\}}}  \left(f\left(s,\frac{p_{s-}}{p_{s-}+(1-p_{s-})\frac{\nu_0}{\nu_1}(h)}\right) -
                        f(s,p_{s-})\right)\\\notag
                        \cdot &(p_{s-}\nu_1 (dh) + (1-p_{s-})\nu_0 (dh))d\K(s)  \\\notag
                        &- \int_0^t {f_{p}(s,p_{s-}) p_{s-}(1-p_{s-})\beta dM(\K(s))}\\\notag
                        &+ \underset{{s=0}}{\int^t} \int_{h\in{\mathbb{R}\setminus \{0\}}} \left(f\left(s,\frac{p_{s-}}{p_{s-}+(1-p_{s-})\frac{\nu_0}{\nu_1}(h)}\right)  - f(s,p_{s-})\right) \\\notag
                        \cdot & [N(d\K(s),dh) - (p_{s-}\nu_1 (dh) + (1-p_{s-})\nu_0 (dh))d\K(s)  ].\\\notag
\end{align}

The fifth and sixth terms on the right-hand side of
Eq.~(\ref{ito_for_f}) are stochastic integrals with respect to
martingales and therefore they are local martingales (see Jacod and
Shiryaev (1987, Ch.~I, Theorem 4.40) \cite{Jacod}). The seventh term
is a stochastic integral with respect to a compensated random
measure, as will be shown in Corollary \ref{martingales} (parts C2
and C3) in the appendix. Therefore, it is a local martingale (see
Jacod and Shiryaev (1987, Ch.~II, Theorem 1.8) \cite{Jacod}). Hence,
by taking expectations of both sides it follows that the
infinitesimal operator of the process $(t,p_t)$ with respect to the
Markovian control strategy $(\key(t,p))$ is
\begin{align}\notag
(\mathbb{L}^\key f)(t,p) =&  f_t(t,p)
                        -(\vg-\vvg)p(1-p)f_p (t,p)\key(t,p)
                        +\frac{1}{2} \beta^2\sigma^2 f_{pp}(t,p) p^2(1-p)^2\key(t,p)\\\notag
                        &+\int_{\mathbb{R}\setminus \{0\}} { \left(f\left(t,\frac{p}{p+(1-p)\frac{\nu_0}{\nu_1}(h)}\right) - f(t,p)\right)
                        (p \nu_1 (dh) + (1-p)\nu_0 (dh))\key(t,p)  }.\\\notag
\end{align}
When $f$ is a function of $p$ only, we will use the same notation
for the infinitesimal operator of the process $(p_t)$ with respect
to the time-homogeneous Markovian control strategy $\key(p)$.
Specifically,
\begin{align}\label{Lkp}
(\mathbb{L}^\key f)(p) =&  -(\vg-\vvg)p(1-p)f' (p)\key(p)
                        +\frac{1}{2} \beta^2\sigma^2 f''(p) p^2(1-p)^2\key(p)\\\notag
                        &+\int_{\mathbb{R}\setminus \{0\}} { \left(f\left(\frac{p}{p+(1-p)\frac{\nu_0}{\nu_1}(h)}\right) - f(p)\right)
                        (p \nu_1 (dh) + (1-p)\nu_0 (dh))\key(p)  }.\\\notag
\end{align}

\section{The value function}
\label{s:value} In the next section we will introduce the
Hamilton--Jacobi--Bellman (HJB) for our problem. The value function
$U(t,p)$ is not $C^2$ in its second coordinate, and therefore we
need to formalize the optimal problem differently. Additionally to
the Markovian control strategy $\key(t,p)$, we will add an
artificial stopping time $\tau$ to the new strategy space. This new
form will help us solve the HJB although $U(t,p)$ is not $C^2$. We
start with a few basic properties of the value function $U(t,p)$.
\begin{prop}\label{convex}
For every fixed $t\geq 0$, the function $p\mapsto U(t,p)$ is
monotone, nondecreasing, convex, and continuous.

\end{prop}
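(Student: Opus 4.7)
The pivotal observation is that, for each fixed admissible control strategy $\KK$ (viewed as an $\F^X$-adapted process, not depending on the prior $p$), the map $p\mapsto V_\KK(t,p)$ is affine:
\begin{equation*}
V_\KK(t,p)=p\,V^1_\KK+(1-p)\,V^0_\KK,\qquad V^\theta_\KK:=E\!\left[\int_t^\infty re^{-rs}dY(\KK(s))\,\Big|\,\theta\right],
\end{equation*}
since $E^{t,p}=pE[\,\cdot\mid\theta=1]+(1-p)E[\,\cdot\mid\theta=0]$ and the two conditional expectations $V^\theta_\KK$ do not depend on $p$. Convexity then comes for free: $U(t,\cdot)=\sup_\KK V_\KK(t,\cdot)$ is a pointwise supremum of affine functions on $[0,1]$, hence convex.

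For monotonicity, I would show that each affine function $V_\KK(t,\cdot)$ is itself nondecreasing, so that the supremum inherits this property. Using Proposition \ref{replacep} at $p=0$ and $p=1$ (where the SDE (\ref{dP1}) forces $p_{s-}\equiv 0$ and $p_{s-}\equiv 1$ respectively, owing to the prefactor $p_{s-}(1-p_{s-})$), the conditional payoffs reduce to
\begin{equation*}
V^\theta_\KK=E\!\left[\int_t^\infty re^{-rs}[\mu_\theta\,\key(s)+\varrho(1-\key(s))]\,ds\,\Big|\,\theta\right],\qquad \theta\in\{0,1\},
\end{equation*}
so Assumption A5 ($\mu_0<\varrho<\mu_1$) gives the chain $V^0_\KK\le e^{-rt}\varrho\le V^1_\KK$, and in particular $V^1_\KK-V^0_\KK\ge 0$.

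Continuity, in fact Lipschitz continuity, will follow from the same expression: combining $e^{-rt}\mu_0\le V^0_\KK\le e^{-rt}\varrho$ with $e^{-rt}\varrho\le V^1_\KK\le e^{-rt}\mu_1$ yields the uniform bound $V^1_\KK-V^0_\KK\le e^{-rt}(\mu_1-\mu_0)$, so that
\begin{equation*}
|V_\KK(t,p_1)-V_\KK(t,p_2)|=|p_1-p_2|\,(V^1_\KK-V^0_\KK)\le e^{-rt}(\mu_1-\mu_0)\,|p_1-p_2|
\end{equation*}
for all $p_1,p_2\in[0,1]$, uniformly in $\KK$. Taking $\sup_\KK$ on both sides transfers this Lipschitz estimate to $U(t,\cdot)$.

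There is no single hard step in this proof; the one point that warrants mild care is keeping $\KK$ regarded as a path-adapted process rather than as a posterior-feedback rule, so that $V^\theta_\KK$ is genuinely independent of $p$. Once that is in place, the argument reduces to elementary convex analysis layered on top of Proposition \ref{replacep}.
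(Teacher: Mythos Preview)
Your proof is correct and rests on the same key observation as the paper's: for each fixed allocation strategy $\KK$, the map $p\mapsto V_\KK(t,p)$ is affine in $p$, so $U(t,\cdot)$ is convex as a supremum of affine functions. The paper then finishes more tersely---it notes that $U(t,p)\geq e^{-rt}\varrho$ (safe arm), that $U(t,0)=e^{-rt}\varrho$ and $U(t,1)=e^{-rt}\mu_1$, and lets convexity plus these endpoint facts carry both monotonicity and continuity---whereas you extract monotonicity by showing each individual $V_\KK(t,\cdot)$ has nonnegative slope and upgrade continuity to an explicit Lipschitz bound $e^{-rt}(\mu_1-\mu_0)$; this is a mild refinement rather than a different route.
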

\begin{proof}
Fix for a moment an allocation strategy $\KK$. By Definition
\ref{Etp} and Eq.~(\ref{VKtp}) the expected discounted payoff from
time $t$ onwards under strategy $\KK$ when $p_t =p$ is
\begin{align}\notag
V_\KK(t,p) &= E^{t,p} \left[\int_t^{\infty} r e^{-rs}
dY(\KK(s))\right]\\\notag
              &= \left.p E\left[\int_t^{\infty} r e^{-rs} dY(\KK(s))\right|  \theta =
1\right]+\left.(1-p) E \left[\int_t^{\infty} r e^{-rs}
dY(\KK(s))\right| \theta = 0\right].\\\notag
\end{align}
For every fixed $t\geq 0$ the function $p\mapsto V_{\KK}(t,p)$ is
linear. Therefore $U(t,p)$, as the supremum of linear functions, is
convex. By always choosing the safe arm, the DM can achieve at least
$e^{-rt}\varrho$, and by always choosing the risky arm the DM can
achieve at least $ e^{-rt} (p \mu_1 + (1-p) \mu_0) $. Since
$U(t,0)=e^{-rt}\varrho$ and $U(t,1) = e^{-rt}\mu_1$, the convexity
of $U(t,p)$ implies that the function $p \mapsto U(t,p)$ is
continuous and nondecreasing in $p$.
\end{proof}

It follows from Proposition \ref{convex} that for every fixed $t\geq
0$ there is a time-dependent cut-off $p^*_t$ in $[0,1]$ such that
$U(t,p)=\varrho$ if $p\leq p^*_t$ and $U(t,p)>\varrho$ otherwise. It
follows that for every fixed $t$ the strategy $\key (t,p)\equiv 0$
that always chooses the safe arm is optimal for prior beliefs in
$[0,p^*_t]$. By this conclusion, Proposition \ref{replacep}, and
Remark \ref{key} we deduce that the optimal problem (\ref{defineU})
can be reduced to a combined optimal stopping and stochastic control
problem as follows:

\begin{align}\label{UwithW}
U(t,p) = \underset{  t\leq \tau  ,\;  \key\in\Upsilon_M  }{\sup}
E^{t,p} \left[\int_t^\tau r e^{-rs} W(p_{s-},\key(s,p_{s-}))ds +
\varrho e^{-r\tau}\right],
\end{align}
where $W(p,l):=(\mu_1 p +\mu_0 (1-p))l + \varrho(1-l)$ is the
instantaneous payoff given the posterior $p$, using the Markovian
control $l$. This representation of the value function will help us
solve the HJB equation. Denote the continuation region to be
\begin{align}\notag
D:=\{(t,p) \mid U(t,p)> \varrho e^{-rt}\}.
\end{align}
This is the region where the optimal action of the DM is to continue
(that is, $k(t,p)>0$, and $\tau>t$). The next lemma shows that the
region $D$ is invariant with respect to $t$. This means that the
optimal stopping time $\tau$ (whenever it exists) does not
depend\footnote{In fact, we will show in Theorem \ref{Up1} that an
optimal stopping time and an optimal control strategy do exist and
the optimal control $\key$ is also time-homogeneous; that is, $\key$
does not depend on $t$, and therefore the allocation strategy $\KK$
does not depend on $t$ either. } on $t$.

\begin{lem}
For every $t\geq 0$ and every $p\in[0,1]$ one has
$U(t,p)=e^{-rt}U(0,p)$. In particular, $(t,p)\in D$ if and only if
$(s,p)\in D$, for every $t,s\geq 0$ and every $p\in[0,1]$.
\end{lem}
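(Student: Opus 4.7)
The plan is to exploit two ingredients: (i) the stationary independent increments of the underlying L\'evy processes $(X_\theta(t))$, and (ii) the Markov property of the posterior process $(p_t)$ established in Remark \ref{Markovian}. Together these yield a time-homogeneity that transforms a problem starting at time $t$ with prior $p$ into a problem starting at time $0$ with the same prior, at the cost of a discount factor $e^{-rt}$.

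First, I would perform the change of variable $u = s-t$ in the integral defining $V_\KK(t,p)$ given by Eq.~(\ref{VKtp}):
\begin{align}\notag
V_\KK(t,p) = e^{-rt}\, E^{t,p}\!\left[\int_0^\infty r e^{-ru}\, dY(\KK(u+t))\right].
\end{align}
Next, I would set up a bijection between the admissible strategies used from time $t$ onward with $p_t=p$ and the admissible strategies used from time $0$ onward with prior $p$. Given $\KK$ in the former family, define the shifted strategy $\tilde\KK(u) := \KK(u+t) - \KK(t)$. Conditions (\ref{T1})--(\ref{T3}) for $\tilde\KK$ follow from those for $\KK$, because the stationary independent increments of $(X_\theta(t))$ imply that the shifted payoff process $(X(\K(t)+\cdot) - X(\K(t)))$ is, conditional on $\F_{\K(t)}^X$, again a L\'evy process with the same triplet $\langle \mu_\theta, \sigma, \nu_\theta\rangle$. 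The map $\KK \mapsto \tilde\KK$ is easily inverted, giving the desired bijection.

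Combining these two ingredients, the Markov property of $(t,p_t)$ implies that conditional on $p_t = p$ the future dynamics depend only on $p$, so that
\begin{align}\notag
E^{t,p}\!\left[\int_0^\infty r e^{-ru}\, dY(\KK(u+t))\right] = V_{\tilde\KK}(0,p).
\end{align}
Therefore $V_\KK(t,p) = e^{-rt} V_{\tilde\KK}(0,p)$. Taking the supremum over admissible $\KK$ on the left and, via the bijection, over admissible $\tilde\KK$ on the right, I obtain $U(t,p) = e^{-rt} U(0,p)$.

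The statement about $D$ is then immediate: $(t,p)\in D$ iff $U(t,p)>\varrho e^{-rt}$ iff $e^{-rt}U(0,p) > \varrho e^{-rt}$ iff $U(0,p)>\varrho$, and the last condition is independent of $t$. The main subtlety in the argument is ensuring that the shifted filtration $\F^X_{\K(u+t)-\K(t)}$ generated by the increments of the time-changed process after $t$ coincides in law with the original $\F^X_{\K(u)}$ used at time $0$, so that measurability condition (\ref{T3}) is preserved under the bijection; this is exactly where the stationary independent increments of the L\'evy process are used.
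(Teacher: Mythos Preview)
Your argument is correct and follows essentially the same route as the paper: a time shift $u=s-t$, the Markov/time-homogeneity property, and a bijection between admissible strategies at time $t$ and at time $0$, so that taking suprema yields $U(t,p)=e^{-rt}U(0,p)$. The only cosmetic difference is that the paper performs this shift in the combined stopping--control representation of $U$ given in Eq.~(\ref{UwithW}) (so the bijection acts on pairs $(\tau,\key)\mapsto(\tilde\tau,\key(\cdot+t,\cdot))$), whereas you work directly with the allocation-strategy formulation $V_\KK(t,p)$ from Eq.~(\ref{VKtp}); the underlying idea is the same.
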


\begin{proof}
The first claim follows from the following list of equalities:
\begin{align}\label{Utp_U0p}
U(t,p) &= \underset{  t\leq \tau  ,\; 0 \leq \key \leq 1 }{\sup}
E^{t,p} \left[\int_t^\tau r e^{-rs} W(p_{s-},\key(s,p_{s-}))ds +
\varrho e^{-r\tau}\right]
\\\notag
       &= \underset{  0\leq {\tilde{\tau}}  ,\; 0 \leq \key \leq 1 }{\sup}
E \left[\int_0^{\tilde{\tau}} r e^{-r(t+u)}
W(p_{u-}^p,\key(t+u,p_{u-}^p))du + \varrho
e^{-r(t+{\tilde{\tau}})}\right]
\\\notag
        &= e^{-rt}\underset{  0\leq \tilde{\tau}  ,\; 0 \leq \key \leq 1 }{\sup}
        E^{0,p} \left[\int_0^{\tilde{\tau}} r e^{-ru}
W(p_{u-},\key(t+u,p_{u-}))du + \varrho e^{-r{\tilde{\tau}}}\right]
\\\notag
        &=e^{-rt}U(0,p),\\\notag
\end{align}
where the second equality follows from the Markovian property of
$p_t$ (see Remark \ref{Markovian}).
\end{proof}

This lemma yields that the cut-off $p^*_t$ discussed earlier is
independent of $t$. We therefore denote it by $p^*$.

\section{The HJB equation}
\label{s:HJB} The following proposition introduces the HJB equation
for our problem.
\begin{prop}
Let $F\in C^{1}[0,1] $ be a function that satisfies
\begin{align}
F(p)\geq \varrho \;\text{for every}\;p\in[0,1].\label{Fp_varrho}
\end{align}
Define the continuation region of $F$ by
\begin{align}
C:=\{ p\in[0,1] \mid F(p)> \varrho \}.\label{continuation_region}
\end{align}
Suppose that
\begin{align}\label{C_D}
&[0,\infty)\times C=D.\\\label{C2} &F\in C^2([0,1] \backslash
\partial C) \;\; \text{with locally bounded derivatives near} \;
\partial C.\\\label{HJB_leq} &\mathbb{L}^\key F(p)+rW(p,\key(p))-rF(p)\leq
0 \; \text{on} \; [0,1]\backslash\partial C \\\notag &\text{for all
} k\in \Upsilon_M,\; \text{and all}\; p\in[0,1].\\\label{HJB_equal}
&\text{There is a control $\key^*$ for which the inequality in
Eq.~(\ref{HJB_leq})}\\\notag &\text{ holds with equality.}\\\notag
\end{align}
Then, $\key^*$ is the optimal control, $\tau_D:=\inf \{t\geq 0 \mid
F(p_t)\not\in C\}$ is the optimal stopping time, and
$U(t,p)=e^{-rt}F(p)$.
\end{prop}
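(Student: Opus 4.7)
The argument is a standard verification. Fix an arbitrary Markovian control $\key \in \Upsilon_M$ and a stopping time $\tau \geq t$, and let $(\tau_n)$ be a localizing sequence with $\tau_n \uparrow \infty$ that turns the stochastic integrals against $dM(\K(s))$ and the compensated Poisson random measure in~(\ref{ito_for_f}) into genuine martingales when stopped at $\tau_n$. Applying It\^o's formula as in~(\ref{ito_for_f}) to $e^{-rs}F(p_s)$ on $[t,\tau\wedge\tau_n]$ yields
\begin{align*}
e^{-r(\tau\wedge\tau_n)}F(p_{\tau\wedge\tau_n}) - e^{-rt}F(p) = \int_t^{\tau\wedge\tau_n} e^{-rs}\bigl[(\mathbb{L}^{\key} F)(p_{s-}) - rF(p_{s-})\bigr]ds + M_n,
\end{align*}
where $E^{t,p}[M_n]=0$.

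I would then invoke the HJB inequality~(\ref{HJB_leq}) to bound the integrand above by $-r e^{-rs}W(p_{s-},\key(s,p_{s-}))$, take expectations, and send $n\to\infty$; since $F$ and $W$ are bounded on $[0,1]$ and $r>0$, dominated convergence gives
\begin{align*}
e^{-rt}F(p) \geq E^{t,p}\!\left[\int_t^\tau r e^{-rs}W(p_{s-},\key(s,p_{s-}))\,ds + e^{-r\tau}F(p_\tau)\right] \geq V_{\KK}(t,p),
\end{align*}
where the second inequality uses $F\geq \varrho$ from~(\ref{Fp_varrho}) together with $e^{-r\tau}F(p_\tau)\to 0$ on $\{\tau=\infty\}$. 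Taking the supremum over $(\tau,\key)$ delivers $e^{-rt}F(p) \geq U(t,p)$.

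For the matching lower bound I would re-run the same computation for the candidate pair $(\key^*,\tau_D)$. By~(\ref{HJB_equal}) the intermediate inequality becomes an equality for $s\in[t,\tau_D)$, and the definition of $\tau_D$ together with~(\ref{C_D}) and the continuity of $F$ forces $F(p_{\tau_D})=\varrho$ on $\{\tau_D<\infty\}$, while the bounded-discount tail handles $\{\tau_D=\infty\}$. Hence every inequality in the previous display is an equality for $(\key^*,\tau_D)$, so $e^{-rt}F(p)=V_{\KK^*}(t,p)\leq U(t,p)$, which combined with the upper bound establishes the claim. The main technical obstacle is the lack of full $C^2$ regularity of $F$ at $\partial C$, which means the It\^o formula in~(\ref{ito_for_f}) is not literally applicable; to overcome this I would either approximate $F$ uniformly by a sequence of $C^2$ functions that coincide with $F$ outside a shrinking neighbourhood of $\partial C$ and pass to the limit using the local boundedness of $F''$ asserted in~(\ref{C2}) (controlling both the diffusive $F''$ term and the jump differences $F(p')-F(p)$ via a Taylor expansion near $\partial C$), or invoke a Meyer--It\^o/Tanaka-type formula where the $C^1$ (smooth pasting) condition across $\partial C$ eliminates any local-time contribution and~(\ref{C2}) guarantees the remaining second-order integral is finite.
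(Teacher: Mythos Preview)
Your proposal is correct and follows essentially the same verification route as the paper: apply It\^o to $e^{-rt}F(p_t)$ with a localizing sequence, use~(\ref{HJB_leq}) for the upper bound and~(\ref{HJB_equal}) with $(\key^*,\tau_D)$ for the matching lower bound. The paper implements precisely the first of your two fixes for the $C^2$ defect---it approximates $J=e^{-rt}F$ by smooth $J^n$---and the specific device that makes the limit clean is a lemma showing that $(p_t)$ spends zero Lebesgue time at any fixed level in $(0,1)$ (since $\ln((1-p_s)/p_s)$ is, conditionally on $\theta$, a time-changed L\'evy process); this allows one to insert $\chi_{\{p_{s-}\notin\partial C\}}$ into the $ds$-integrals at no cost, after which uniform convergence of $J^n$ and its derivatives on compacta away from the single point $\partial C=\{p^*\}$ suffices.
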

Conditions (\ref{HJB_leq}) and (\ref{HJB_equal}) represent the HJB
equation in our model.
\begin{rem}
The function $F$ need not be $C^2$ at the boundary of $C$. This is
due to the representation of $U(t,p)$ in Eq.~(\ref{UwithW}) as a
combined optimal stopping and stochastic control problem. This issue
will be further discussed in the proof.
\end{rem}

\begin{proof}
Define $J(t,p):=e^{-rt}F(p)$. Then for every $(t,p)\in
[0,\infty)\times([0,1]\setminus\partial C)$,
\begin{align}\label{LJLF}
\mathbb{L}^kJ(s,p)=-re^{-rt}F(p) + e^{-rt}\mathbb{L}^kF(p).\\\notag
\end{align}

By Eq.~(\ref{C2}), $J(t,p)\in
C^{1,2}([0,\infty)\times([0,1]\setminus\partial C))$ and
$J_{pp}(s,p)$ is bounded near $[0,\infty)\times\partial C$.
Therefore, there exists a sequence $\{J^n\}_{n\geq 1}\subseteq
C^{1,2}(D)$ such that
\begin{align}\notag
J^n\rightarrow J,\;\;J^n_t\rightarrow J_t,\;\;J^n_p\rightarrow
J_p,\;\;J^n_{pp}\rightarrow J_{pp}
\end{align}
uniformly on every compact subset of
$[0,\infty)\times([0,1]\setminus\partial C)$ as $n$ goes to infinity
(see {\O}ksendal (2000, Theorem C.1) \cite{Oksendal}). Denote by
$L(t)$ the sum of the last three terms on the right-hand side of
Eq.~(\ref{ito_for_f}). The process $(L(t))$, as the sum of local
martingales, is a local martingale. Let $(\delta_m)$ be a sequence
of increasing (a.s.) stopping times that diverge (a.s.), such that
$L(\delta_m \wedge t)$ is\footnote{$a\wedge b:= \min{\{a,b\}}$.} a
martingale for every $m$. Let $\tau$ be an arbitrary stopping time
and define $\tau_m:=\tau\wedge m \wedge\delta_m$. We will prove the
following series of equations:
\begin{align}\notag
E&^{0,p}\left[ e^{-r\tau_m}F(p_{\tau_m}) \right]-F(p)\\\label{21} &=
E^{0,p}\left[ J(\tau_m,p_{\tau_m}) \right]- J(0,p)
\\\label{22}
&=
\underset{n\rightarrow\infty}{\lim}\left(E^{0,p}\left[J^n(\tau_m,p_{\tau_m})
\right]- J^n(0,p)\right)\\ \label{23}&=
\underset{n\rightarrow\infty}{\lim}E^{0,p}\left[\int_0^{\tau_m}\mathbb{L}^kJ^n(s,p_{s-})ds
\right]\\\label{24} &=
\underset{n\rightarrow\infty}{\lim}E^{0,p}\left[\int_0^{\tau_m}\mathbb{L}^kJ^n(s,p_{s-})\chi_{\{
p_{s-}\not\in\partial C\}}ds \right] \\ \label{25} &=
E^{0,p}\left[\int_0^{\tau_m}\mathbb{L}^kJ(s,p_{s-})\chi_{\{
p_{s-}\not\in\partial C\}}ds \right]\\ \label{26} &= E^{0,p}\left[
\int_0^{\tau_m}{e^{-rs}\left(-rF(p_{s-})+
\mathbb{L}^kF(p_{s-})\right)\chi_{\{ p_{s-}\not\in\partial C\}}ds}
\right]\\ \label{27} &{\leq} - E^{0,p}\left[
\int_0^{\tau_m}e^{-rs}W(p_{s-},\key(p_{s-})) \chi_{\{
p_{s-}\not\in\partial C\}}ds \right]\\ \label{28} &= - E^{0,p}\left[
\int_0^{\tau_m}e^{-rs}W(p_{s-},\key(p_{s-})) ds  \right],\\\notag
\end{align}
where, $\chi_A=1$ if $A$ is satisfied and zero otherwise.
Eq.~(\ref{21}) follows from the definition of $J(t,p)$.
Eqs.~(\ref{22}) and (\ref{25}) follow from the choice of the
sequence $J^n$. Eq.~(\ref{23}) follows from the definition of the
infinitesimal operator. By condition (\ref{C_D}), the boundary
$\partial C$ of C is a single point (specifically, it is the cut-off
point $\p1*$). Therefore, Eqs.~(\ref{24}) and (\ref{28}) follow from
Lemma \ref{spends} in the appendix.
Eq.~(\ref{26}) follows from Eq.~(\ref{LJLF}), and inequality
(\ref{27}) follows from (\ref{HJB_leq}).

By Eq. (\ref{Fp_varrho}) and the series of
Eqs.~(\ref{21})--(\ref{28}) we obtain
\begin{align}\notag
&E^{0,p}\left[ \int_0^{\tau_m}{e^{-rs}W(p_{s-},\key(p_{s-})) ds}  +
\varrho e^{-r\tau_m} \right] \\\notag &\leq E^{0,p}\left[
\int_0^{\tau_m}{e^{-rs}W(p_{s-},\key(p_{s-})) ds}  + e^{-r\tau_m}
F(p_{\tau_m}) \right]\leq F(p).
\end{align}
By taking $m\rightarrow\infty$ we deduce that
\begin{align}\label{F_geq_W}
E^{0,p}\left[ \int_0^{\tau}{e^{-rs}W(p_{s-},\key(p_{s-})) ds}  +
\varrho e^{-r\tau} \right]\leq F(p).
\end{align}
The left-hand side of Eq.~(\ref{F_geq_W}) is the payoff of the DM
using the stopping time $\tau$ and the stationary Markovian control
strategy $k$. By taking the supremum in Eq.~(\ref{F_geq_W}) it
follows that $U(0,p)\leq F(p)$ for every $0\leq p\leq 1$. To prove
the opposite inequality, apply the argument above to the stationary
Markovian control strategy $\key^*=\key^*(p_{s-})$ and the stopping
time $\tau_D$, so that the inequality in Eq.~(\ref{27}) is replaced
by an equality. By taking the limit $m\rightarrow\infty$ and by the
definition of $D$ we obtain
\begin{align}\notag
U(0,p)&\geq E^{0,p}\left[
\int_0^{\tau_D}{e^{-rs}W(p_{s-},\key^*(p_{s-})) ds}  + e^{-r\tau_D}
\varrho \right]\\\notag &= E^{0,p}\left[
\int_0^{\tau_D}{e^{-rs}W(p_{s-},\key^*(p_{s-})) ds} + e^{-r\tau_D}
F(p_{\tau_D}) \right]= F(p).
\end{align}
\end{proof}

\section{The optimal strategy}
\label{s:main} In this section we present our main result that
states that there is a unique optimal allocation strategy, and that
it is a cut-off strategy. The theorem also provides the exact
cut-off point and the corresponding expected payoff in terms of the
data of the problem. Let $\alpha^*$ be the unique solution in
$(0,\infty)$ of the equation $f(\alpha)=0$, where
\begin{equation}\label{eta}
f(\alpha) :=  \int\left(\left(\frac{\nu_0}{\nu_1}(h)\right)^{\alpha}
-1\right)\vvh + \alpha (\vg-\vvg) +\frac{1}{2}(\alpha+1)\alpha
\left(\dfrac{\mu_1-\mu_0}{\sigma}\right)^2 - r = 0.
\end{equation}
The existence and the uniqueness of such a solution are proved in
Lemma \ref{alpha} in the appendix.

\begin{thm}\label{Up1}
Denote $p^*:= \frac{\alpha^*
(\varrho-\mu_0)}{(\alpha^*+1)(\mu_1-\varrho) +
\alpha^*(\varrho-\mu_0)}$. Under Assumptions A1--A6 there is a
unique optimal strategy $\key^*$ that is time-homogeneous and is
given by
\begin{align}\label{kstar}
&\key^*=
\begin{cases}
0               &\text{if $p \leq p^*$}, \\
1               &\text{if $p > p^*$}.
\end{cases}
\end{align}
The expected payoff under $\key^*$ is
\begin{equation}\label{E:Up1}
U(0,p) = V_{\key^*} (0,p) =
\begin{cases}
\varrho                                                    &\text{if $p \leq p^*$}, \\
p\mu_1 +(1-p)\mu_0 +C_{\alpha^*} (1-p)(\frac{1-p}{p})^{\alpha^*}
&\text{if $p
> p^*$},
\end{cases}
\end{equation}
$\\$
where $C_{\alpha^*} = \frac{\varrho-\mu_0 - p^* (\mu_1 -\mu_0)}{(1-p^*) \left( \frac{1-p^*}{p^*}\right)^{\alpha^*}}.$\\
\end{thm}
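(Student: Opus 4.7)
The plan is to exhibit a verification function $F:[0,1]\to\mathbb{R}$ given by $F\equiv\varrho$ on $[0,p^*]$ and $F(p)=p\mu_1+(1-p)\mu_0+C_{\alpha^*}(1-p)\bigl(\tfrac{1-p}{p}\bigr)^{\alpha^*}$ on $(p^*,1]$, to verify all hypotheses of the HJB proposition of Section~\ref{s:HJB}, and then to conclude that $\key^*$ is optimal and $U(0,p)=F(p)$; the stated form~\eqref{E:Up1} follows by reading off the pieces. I split the verification into three steps.

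First, I would derive the smooth branch and fix its two parameters. Writing $F=L+C_{\alpha^*}g$ on $(p^*,1]$ with $L(p)=p\mu_1+(1-p)\mu_0$ and $g(p)=p^{-\alpha^*}(1-p)^{\alpha^*+1}$, a direct computation from~\eqref{Lkp} yields $\mathbb{L}^1 L\equiv 0$: the drift $-(\vg-\vvg)p(1-p)(\mu_1-\mu_0)$ cancels the jump $(\mu_1-\mu_0)p(1-p)(\vg-\vvg)$. For $g$, the identities $g'/g=-(\alpha+p)/[p(1-p)]$, $p^2(1-p)^2 g''/g=\alpha(\alpha+1)$, and $g(p^h)/g(p)=(\nu_0/\nu_1)(h)^{\alpha+1}/[p+(1-p)(\nu_0/\nu_1)(h)]$ make $\mathbb{L}^1 g/g$ independent of $p$, and this constant equals $r$ exactly when $\alpha=\alpha^*$ solves $f(\alpha)=0$ in~\eqref{eta}; existence and uniqueness of $\alpha^*$ are Lemma~\ref{alpha}. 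Thus $\mathbb{L}^1 F + rL - rF \equiv 0$ on $(p^*,1)$. Imposing value matching $F(p^*)=\varrho$ and smooth pasting $F'(p^*)=0$ then produces the stated formulas for $p^*$ and $C_{\alpha^*}$; in particular, $p^*<(\varrho-\mu_0)/(\mu_1-\mu_0)$ by A5, so $C_{\alpha^*}>0$.

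Second, I would verify the geometric, smoothness, and equality conditions. Strict convexity of $g$ with $C_{\alpha^*}>0$ makes the smooth branch strictly convex with minimum $\varrho$ at $p^*$, so $F>\varrho$ on $(p^*,1]$; hence $F\geq\varrho$ on $[0,1]$ and $C:=\{F>\varrho\}=(p^*,1]$. Smooth pasting gives $F\in C^1[0,1]$, and $F\in C^2([0,1]\setminus\{p^*\})$ with bounded derivatives near $p^*$. Equality in the HJB (condition~\eqref{HJB_equal}) is built in: on $(p^*,1)$ with $\key^*=1$ by the ODE, and on $[0,p^*)$ with $\key^*=0$ because $\mathbb{L}^0 F=0$ and $rW(p,0)-r\varrho=0$. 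For the HJB inequality~\eqref{HJB_leq}, time-homogeneity gives $\mathbb{L}^\key F=\key\,\mathbb{L}^1 F$, hence
\[
\mathbb{L}^\key F+rW(p,\key)-rF=\key\bigl[\mathbb{L}^1 F+rW(p,1)-rF\bigr]+(1-\key)(r\varrho-rF).
\]
On $(p^*,1)$ the first bracket vanishes and $F>\varrho$ kills the second; on $[0,p^*]$ the expression reduces to $\key\,G(p)$ with $G(p):=r[W(p,1)-\varrho]+\int[F(p^h)-\varrho](p\,\nu_1(dh)+(1-p)\,\nu_0(dh))$.

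The main obstacle is showing $G\leq 0$ on $[0,p^*]$, which I would handle by monotonicity plus a boundary computation. For monotonicity: $W(p,1)-\varrho$ is linear-increasing in $p$; by A6 the family of measures $p\,\nu_1+(1-p)\,\nu_0$ is pointwise non-decreasing in $p$; the map $p\mapsto p^h$ is non-decreasing for each fixed $h$; and $F$ is non-decreasing, so $F(p^h)-\varrho\geq 0$ is non-decreasing in $p$. Splitting $G(p_2)-G(p_1)$ for $p_1<p_2\leq p^*$ into the linear part and the integral part shows both pieces are non-negative, so $G$ is non-decreasing on $[0,p^*]$. For the boundary, apply the HJB equation for the everywhere-defined extension $F^{\mathrm{ext}}(p):=L(p)+C_{\alpha^*}g(p)$ at $p=p^*$: using $F^{\mathrm{ext}}(p^*)=\varrho$, $(F^{\mathrm{ext}})'(p^*)=0$, and $p^{*h}\geq p^*$ (so $F(p^{*h})=F^{\mathrm{ext}}(p^{*h})$), the HJB equality forces $G(p^*)=-\tfrac12\beta^2\sigma^2(F^{\mathrm{ext}})''(p^*)(p^*)^2(1-p^*)^2\leq 0$. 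Therefore $G(p)\leq G(p^*)\leq 0$ on $[0,p^*]$, the HJB proposition applies, and we conclude that $\key^*$ is optimal, $\tau_D=\inf\{t:p_t\leq p^*\}$ is the optimal stopping time, and $U(0,p)=F(p)$. Uniqueness within $\Upsilon_M$ follows from strictness: on $(p^*,1)$ the gap $(1-\key)(r\varrho-rF)<0$ for $\key<1$, and on $(0,p^*)$ the gap $\key\,G(p)<0$ for $\key>0$, since the linear part of the monotonicity argument yields $G$ strictly increasing and hence $G(p)<G(p^*)\leq 0$.
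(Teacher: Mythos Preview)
Your proposal is correct and follows essentially the same verification route as the paper: exhibit the candidate $F$, check the hypotheses of the HJB proposition (smooth fit, the ODE on $(p^*,1)$, and the inequality on $[0,p^*]$ via monotonicity in $p$ and evaluation at the boundary), and conclude. Your handling of the boundary value $G(p^*)=-\tfrac12\beta^2\sigma^2(F^{\mathrm{ext}})''(p^*)(p^*)^2(1-p^*)^2\le 0$ is in fact slightly sharper than the paper's corresponding step, which asserts equality there (strictly valid only when $\sigma=0$, though $\le 0$ is all the argument needs).
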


We now discuss the relation between Theorem \ref{Up1} and the
results of Bolton and Harris (1999) \cite{Bolton}, Keller, Rady, and
Cripps (2005) \cite{Keller2005}, and Keller and Rady (2010)
\cite{Keller2010}. The expected payoff from the risky arm if no
information is available is
\begin{align}\notag
\int_0^\infty re^{-rs} E[X(s)]ds =\int_0^\infty re^{-rs} [p\mu_1
+(1-p)\mu_0]s ds =  p\mu_1 +(1-p)\mu_0 .
\end{align}
One can verify that the strategy $\key^* \equiv 1$ and the function
$F(p) =p\mu_1 +(1-p)\mu_0$ satisfy Condition (\ref{HJB_equal}), and
following the results of Bolton and Harris (1999) \cite{Bolton} and
Keller, Rady, and Cripps (2005) \cite{Keller2005}, one can ``guess"
that a function of the form $C(1-p)(\frac{1-p}{p})^{\alpha}$
satisfies Condition (\ref{HJB_equal}) as well. This leads to the
form of the optimal payoff that appears in Eq.~(\ref{E:Up1}). The
function $C_{\alpha^*} (1-p)(\frac{1-p}{p})^{\alpha^*}$ is the
option value for the ability to switch to the safe arm. The
parameters of the payoff processes that determine the cut-off point
$p^*$ and the optimal payoff $U(0,p)$ are the expected payoffs
$\mu_1$ and $\mu_0$. In Bolton and Harris (1999) \cite{Bolton} the
only component in the risky arm is the Brownian motion with drift.
Therefore, $\nu_i \equiv 0$, so that $ \alpha^* =
(-1+\sqrt{1+8r\sigma^2 / (\mu_1 -\mu_0)^2})/2$.
In Keller, Rady, and Cripps (2005) \cite{Keller2005}, the risky arm
is either the constant zero (Low type, so that $\vvi \equiv 0$), or
it yields a payoff $\bar{h}$ according to a Poisson process of rate
$\lambda$ (High type). If the risky arm is High, then the only
component in the L\'{e}vy--It\={o} decomposition is the purely
discontinuous component, and $\vi (\bar{h})  = \lambda$ and zero
otherwise. Therefore,  $\mu_1 = \lambda \bar{h}, \; \mu_0 = 0,$ and
$\alpha^* = r/\lambda$.
In Keller and Rady (2010) \cite{Keller2010}, the risky arm yields a
payoff $\bar{h}$ according to a Poisson process. For the High type,
the Poisson process rate is $\lambda_{\high}$, and for the Low type
the rate is $\lambda_{\low}$, where
$\lambda_{\low}<\lambda_{\high}$. The only component in the
L\'{e}vy--It\={o} decomposition is the purely discontinuous
component, and $\nu_i (\bar{h})=\lambda_i$ and zero otherwise.
Therefore,  $\mu_1 = \lambda_{\high} \bar{h}, \; \mu_0 =
\lambda_{\low} \bar{h},$ and $\alpha^*$ is the unique solution of
the equation
\begin{equation}\notag
f(\alpha) :=
\lambda_{\low}\left(\frac{\lambda_{\low}}{\lambda_{\high}}\right)^{\alpha}
+ \alpha (\lambda_{\high}-\lambda_{\low})-\lambda_{\low} - r = 0.
\end{equation}\\

\begin{proof}[\textbf{Proof of Theorem \ref{Up1}}]
Let $\p1*$, $\alpha^*$, and $C_\alpha^*$ be the parameters that were
defined in the theorem. Define the cut-off strategy $\key^*$
associated with the cut-off $\p1*$ by
\begin{equation}\key^* :=
\begin{cases}
0               &\text{if $p \leq p^*$}, \\
1               &\text{if $p > p^*$},
\end{cases}
\end{equation}
$\\$ and the function $F$ by
\begin{equation}\label{F1}
F(p) :=
\begin{cases}
\varrho                                             &\text{if $p \leq p^*$}, \\
F_1(p)                                              &\text{if $p >
p^*$},
\end{cases}
\end{equation}
$\\$ where $F_1(p):= p\mu_1 +(1-p)\mu_0 +C_{\alpha^*}
(1-p)(\frac{1-p}{p})^{\alpha^*}$. We will show that the function
$F(p)$ and the cut-off strategy $\key^*$ are the optimal payoff and
the optimal Markovian control strategy respectively. To this end, we
verify that conditions (\ref{Fp_varrho})--(\ref{HJB_equal}) are
satisfied for $F(p)$ and $\key^*$. Conditions
(\ref{Fp_varrho})--(\ref{C2}) can be easily verified for the
function $F(p)$. To verify conditions (\ref{HJB_leq}) and
(\ref{HJB_equal}) we need the full power of Assumption A6. To prove
that $F(p)$ satisfies Condition (\ref{HJB_leq}) we check separately
the cases $p\leq \p1*$ and $p>\p1*$. Fix a Markovian control
strategy $k\in\Upsilon_M$ and $p\leq\p1*$. Then,
\begin{align}\notag
\mathbb{L}^\key & F(p)+rW(p,\key(p))-rF(p)  \\\label{41} =&
-(\vg-\vvg)p(1-p)F'(p)\key(p) +\frac{1}{2} \beta^2\sigma^2 F''(p)
p^2(1-p)^2\key(p)\\\notag &+ \underset{\mathbb{R}\setminus
\{0\}}{\int} {
\left(F\left(\frac{p}{p+(1-p)\frac{\nu_0}{\nu_1}(h)}\right) -
F(p)\right) (p \nu_1 (dh) + (1-p)\nu_0 (dh))\key(p)}\\\notag &
+r[(\mu_1 p +\mu_0
(1-p))\key(p)+\varrho(1-\key(p))]-rF(p)\\\label{42}=&
\underset{G}{\int} {
\left(F_1\left(\frac{p}{p+(1-p)\frac{\nu_0}{\nu_1}(h)}\right) -
\varrho\right) (p \nu_1 (dh) + (1-p)\nu_0 (dh))\key(p)}\\\notag &
+r[(\mu_1 p +\mu_0
(1-p))\key(p)+\varrho(1-\key(p))]-r\varrho\\\label{43} \leq
&\underset{G}{\int} {
\left(F_1\left(\frac{\p1*}{\p1*+(1-\p1*)\frac{\nu_0}{\nu_1}(h)}\right)
- \varrho\right) (\p1* \nu_1 (dh) + (1-\p1*)\nu_0
(dh))\key(p)}\\\notag & +r[(\mu_1 \p1* +\mu_0
(1-\p1*))\key(p)+\varrho(1-\key(p))]-r\varrho\\\label{44} \leq &
\underset{\mathbb{R}\setminus \{0\}}{\int} {
\left(F_1\left(\frac{\p1*}{\p1*+(1-\p1*)\frac{\nu_0}{\nu_1}(h)}\right)
- \varrho\right) (\p1* \nu_1 (dh) + (1-\p1*)\nu_0
(dh))\key(p)}\\\notag & +r[(\mu_1 \p1* +\mu_0
(1-\p1*))\key(p)+\varrho(1-\key(p))]-r\varrho\\\label{45} = &\, 0,
\end{align}
where $G=\left\{ h \mid \frac{p}{p+(1-p)\frac{\nu_0}{\nu_1}(h) }>
\p1*\right\}$. Eq.~(\ref{41}) follows from Eq.~(\ref{Lkp}).
Eq.~(\ref{42}) follows since for $p<\p1*$ we have $F(p)=\varrho$,
$F'(p)=F''(p)=0$, and for $p>\p1*$ we have $F(p)=F_1(p)$. Inequality
(\ref{43}) follows from the monotonicity of $F_1$ and Assumptions A5
and A6. Inequality ~(\ref{44}) follows since $F_1(q)>\varrho$ for
every $q>\p1*$. Eq.~(\ref{45}) is satisfied for every $\key(p)$ by
the definition of $\p1*$ and $F_1$.

Fix a Markovian control strategy $k\in\Upsilon_M$ and $p > \p1*$.
Then $F(p)=F_1(p)$ and
$F\left(\frac{p}{p+(1-p)\frac{\nu_0}{\nu_1}(h)}\right)
=F_1\left(\frac{p}{p+(1-p)\frac{\nu_0}{\nu_1}(h)}\right) $, since by
Assumption A6, for every $0\leq p<1$ we have
$P_p\left(\frac{p}{p+(1-p)\frac{\nu_0}{\nu_1}(h)}
> p\right) = 1$. Therefore,
\begin{align}\notag
\mathbb{L}^\key & F(p)+rW(p,\key(p))-rF(p)  \\\label{51} =&
-(\vg-\vvg)p(1-p)F'(p)\key(p) +\frac{1}{2} \beta^2\sigma^2 F''(p)
p^2(1-p)^2\key(p)\\\notag &+ \underset{\mathbb{R}\setminus
\{0\}}{\int} {
\left(F\left(\frac{p}{p+(1-p)\frac{\nu_0}{\nu_1}(h)}\right) -
F(p)\right) (p \nu_1 (dh) + (1-p)\nu_0 (dh))\key(p)}\\\notag &
+r[(\mu_1 p +\mu_0
(1-p))\key(p)+\varrho(1-\key(p))]-rF(p)\\\label{52} =&
-(\vg-\vvg)p(1-p)F_1'(p)\key(p) +\frac{1}{2} \beta^2\sigma^2
F_1''(p) p^2(1-p)^2\key(p)\\\notag &+ \underset{\mathbb{R}\setminus
\{0\}}{\int} {
\left(F_1\left(\frac{p}{p+(1-p)\frac{\nu_0}{\nu_1}(h)}\right) -
F_1(p)\right) (p \nu_1 (dh) + (1-p)\nu_0 (dh))\key(p)}\\\notag &
+r[(\mu_1 p +\mu_0
(1-p))\key(p)+\varrho(1-\key(p))]-rF_1(p)\\\label{53} \leq & \, 0,
\end{align}
where the last inequality is satisfied for every $\key(p)$ by the
definition of $F_1$.

By the definition of the Markovian control strategy $\key^*$, for
every $p\leq \p1*$ we have $\key^*(p)=0$ and therefore
Eqs.~(\ref{41})--(\ref{45}) hold with equality, and for every
$p>\p1*$ we have $\key^*(p)=1$ and therefore
Eqs.~(\ref{51})--(\ref{53}) hold with equality by the definition of
$F(p)$. This proves condition (\ref{HJB_equal}).

Without Assumption A6 there may be a set $B$ that satisfies
$\nu_0(B)>0$, such that for every $h\in B$ and every $0\leq p<1$ one
has $P_p\left(\frac{p}{p+(1-p)\frac{\nu_0}{\nu_1}(h)} < p\right) =
1$. Thus, for every $p\in\left[p^*,
\frac{p^*}{p^*+(1-p^*)\frac{\nu_0}{\nu_1}(h)} \right)$ we need to
substitute $F_1(p)$ for $F(p)$, and $\varrho$ for
$F\left(\frac{p}{p+(1-p)\frac{\nu_0}{\nu_1}(h)}\right)$. This
problem has a higher level of complexity and it is not clear how to
approach it using the tools introduced here.
\begin{rem}
Since the process $(p_t)$ has no negative jumps, it enters the
interval $[0,\p1*]$ continuously. Therefore, we expect the value
function to be $C^1$ at the cut-off point $\p1*$. In a model where
the process $(p_t)$ has negative jumps, it can enter the interval
$[0,\p1*]$ with a jump. In this case we expect that the value
function will not be $C^1$ at the cut-off point $\p1*$. For simple
cases of L\'{e}vy processes (such as when the High (resp. Low) type
is a jump process with height $h_1$ and rate $\lambda_1$ (resp.
height $h_0$ and rate $\lambda_0$), where
$h_1\lambda_1>\varrho>h_0\lambda_0$ and $\lambda_1<\lambda_0$, so,
in particular, Assumption A3 fails) the method introduced in Peskir
and Shiryaev (2000) \cite{Peskir2000} may be useful to characterize
the optimal strategy and the value function. In the general setup, a
sample path method may be helpful to approximate the value function
via iterations (see Dayanik and Sezer (2006) \cite{Dayanik}). This
investigation is left for future research.
\end{rem}
\end{proof}

\subsection{Comparative statics}\label{Comparative Statics}
The explicit forms of the cut-off point $p^*$ and the value function
$U$ allow us to derive simple comparative statics of these
quantities. As is well known, a DM who plays optimally switches to
the safe arm later than a myopic DM, and indeed $p^*$ is smaller
than the myopic cut-off point
$p^m:=\frac{\varrho-\mu_0}{\mu_1-\mu_0}$.

Note that the cut-off point $p^*$ is an increasing function of
$\alpha^*$. As can be expected, $\alpha^*$ (and therefore also
$p^*$) increases with the discount rate $r$ and with $\vvi (dh)$,
and decreases with $\vi (dh)$ and with $\mu_1-\mu_0$. That is, the
DM switches to the safe arm earlier in the game as the discount rate
increases, as jumps provide less information, or as the difference
between the drifts of the two types increases.\footnote{Moreover,
$\alpha^*(r=0) = 0$ and $\alpha^*(r=\infty) = \infty$. }
Furthermore, as long as $p>p^*$ the value function $p \mapsto
U(0,p)$ decreases in $\alpha^*$. Thus, decreasing the discount rate,
increasing the informativeness of the
jumps, or increasing the difference between the drifts is beneficial to the DM.\\\\

\subsection{Generalization}
In our model the parameters $\mu_0$ and $\mu_1$ have two roles. By
the definition of the L\'{e}vy process $(X(t))$ they play the role
of the unknown drift. In Eq.~(\ref{UwithW}) they determine the
instantaneous expected payoff. Here we separate these two roles;
that is, we assume that the parameters that determine the
instantaneous expected payoff are not $\mu_0$ and $\mu_1$, but
rather two other parameters, $g_0$ and $g_1$ respectively. Formally,
in the definition of $W(p,l)$ in Eq.~(\ref{UwithW}) we substitute
$\mu_0$ and $\mu_1$ with other parameters, $g_0$ and $g_1$, and
observe the change in the optimal strategy and the optimal payoff.
This formulation allows us to separate the information-relevant
parameters from the payoff-relevant parameters. It also supplies an
optimal strategy and an optimal payoff in a model where the DM
receives a general linear function of the process $(X(t))$.

If we replace $W(p,l)=(\mu_1 p +\mu_0 (1-p))l+\varrho(1-l)$ with
$\widehat{W}(p,l)=(g_1 p+ g_0 (1-p))l + \varrho(1-l),$ where $g_0$
and $g_1$ are constants that satisfy $g_1>\varrho>g_0$, then the
solution of the optimization problem
\begin{align}\notag
\widehat{U}(t,p) = \underset{  t\leq \tau  ,\;  \key\in\Upsilon_M
}{\sup} E^{t,p} \left[\int_t^\tau r e^{-rs}
\widehat{W}(p_{s-},\key(s,p_{s-}))ds + \varrho e^{-r\tau}\right]
\end{align}
has a similar form to the one given in Theorem \ref{Up1}. Denote
$\widehat{p}^*:= \frac{\widehat{\alpha}^*
(\varrho-g_0)}{(\widehat{\alpha}^*+1)(g_1-\varrho) +
\widehat{\alpha}^*(\varrho-g_0)}$, where $\widehat{\alpha}^* =
\alpha^*$. Under Assumptions A1--A6, there is a unique optimal
strategy that is time-homogeneous and is given by
$$\widehat{\key}^* =
\begin{cases}
0               &\text{if $p \leq \widehat{p}^*$}, \\
1               &\text{if $p > \widehat{p}^*$}.
\end{cases}
$$
The expected payoff under $\widehat{\key}^*$ is
\begin{equation}\notag
\widehat{U}(0,p) = \widehat{V}_{\widehat{\key}^*} (0,p) =
\begin{cases}
\varrho                                                &\text{if $p \leq \widehat{p}^*$}, \\
pg_1 +(1-p)g_0 +\widehat{C}_{\alpha^*}
(1-p)(\frac{1-p}{p})^{\alpha^*} &\text{if $p
> \widehat{p}^*$},
\end{cases}
\end{equation}
$\\$ where $\widehat{C}_{\alpha^*} = \frac{\varrho-g_0 -
\widehat{p}^* (g_1 -g_0)}{(1-\widehat{p}^*)
\left( \frac{1-\widehat{p}^*}{\widehat{p}^*}\right)^{\alpha^*}}.$\\
The significance of this result is that it separates the
information-relevant parameters of the model from the
payoff-relevant parameters. The quantity $\alpha^*$ summarizes all
the information-relevant parameters, whereas $g_1$ and $g_0$ are the
only payoff-relevant parameters. For beliefs above the cut-off, the
optimal payoff is the sum of the expected payoff if the DM always
continues, $pg_1 +(1-p)g_0$, and the option value of
experimentation, which is given by $\widehat{C}_{\alpha^*}
(1-p)(\frac{1-p}{p})^{\alpha^*}$.

\section{APPENDIX}
The following lemma states that a time-changed martingale under an
allocation strategy remains a martingale.
\begin{lem}\label{lem:time_change}
Let $(M(t))$ be a martingale with respect to $\F_t$, and let
$(\KK(t))$ be an allocation strategy that satisfies (\ref{T1}),
(\ref{T2}), and (\ref{T3}). Then $(M(\K(t)))$ is a martingale with
respect to $\F_{\K(t)}$.
\end{lem}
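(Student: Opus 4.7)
The plan is to reduce the statement to the optional sampling theorem for bounded stopping times; Conditions (\ref{T1})--(\ref{T3}) conspire to make each $\K(t)$ a bounded stopping time with respect to the filtration $(\Fs)_{s\geq 0}$, after which Doob's optional sampling theorem does essentially all the work.

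First I would verify that for each fixed $t \geq 0$, $\K(t)$ is a bounded stopping time with respect to $(\Fs)_{s\geq 0}$. Boundedness is immediate from Condition (\ref{T2}): since $\KS(t) \geq 0$ and $\KR(t)+\KS(t)=t$, we have $0 \leq \K(t) \leq t$. The stopping-time property is precisely Condition (\ref{T3}), which asserts $\{\K(t) \leq s\} \in \Fs$ for every $s \geq 0$.

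Next, given $0 \leq u \leq t$, Condition (\ref{T1}) (monotonicity of $\KR$) gives $\K(u) \leq \K(t)$. Thus $\K(u)$ and $\K(t)$ are bounded stopping times with $\K(u) \leq \K(t) \leq t$. Doob's optional sampling theorem, applied to a c\`adl\`ag version of $M$, then yields
\[
E[M(\K(t)) \mid \F_{\K(u)}] = M(\K(u))
\]
almost surely, which is the desired martingale identity. Adaptedness of $(M(\K(t)))$ to $(\F_{\K(t)})$ is standard from the definition of the stopped sigma-algebra, and the integrability bound $E[|M(\K(t))|] \leq E[|M(t)|] < \infty$ follows by applying the optional sampling inequality to the submartingale $|M|$.

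The only technical obstacle --- and it is a minor one --- is ensuring that the continuous-time form of optional sampling applies. This requires $M$ to admit a c\`adl\`ag modification and the filtration to satisfy the usual conditions (right-continuity and completeness), both of which are standard in this setting and implicitly in force throughout the paper.
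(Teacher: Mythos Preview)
Your proposal is correct and follows essentially the same approach as the paper: you verify that each $\K(t)$ is a bounded stopping time via (\ref{T1})--(\ref{T3}) and then invoke the optional sampling theorem, which is exactly what the paper does (in a somewhat terser form). The additional remarks you include on integrability, adaptedness, and the need for a c\`adl\`ag modification are reasonable elaborations but do not represent a different strategy.
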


\begin{proof}
Fix $0\leq s\leq t$. Then $\K(s)$ and $\K(t)$ are bounded stopping
times with $\K(s)\leq \K(t)$. The \textit{optional stopping theorem}
implies that $M(K(s)) = E[M(K(t))\mid\F_{\K(s)}]$, and therefore,
$(M(\K(t)))$ is indeed an $(\F_{\K(t)}, P)$-martingale.
\end{proof}

The following lemma presents the (predictable) compensator of a
process under a change of measure; see Jacod and Shiryaev (1987,
Ch.~I, Theorem 3.18) \cite{Jacod}.
\begin{lem}\label{compensator}
Under the notations of Section \ref{s:formal_definition}, let
$(H(t))$ be a stochastic process, and let $\theta$ be an independent
Bernoulli random variable with parameter $p$, such that given
$\theta$, the process $(H(t)-a_\theta t)$ is a martingale with
respect to $\F_t^H$ under $P_\theta$. Let $\tilde{p}_t := P_p
\{\theta=1|\F_t^H\}$ be the posterior belief that $\theta=1$ given
$(H(s))_{s\leq t}$ under the probability measure $P_p$. Then the
process $\left(\int_0^t (\tilde{p}_{s-}a_1 + (1-\tilde{p}_{s-})a_0)
ds\right)$ is the (predictable) compensator of the process $(H(t))$
with respect to $\F_t^H$ under the probability measure $P_p$.
\end{lem}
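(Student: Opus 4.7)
The plan is to verify the two defining properties of the predictable compensator: (a) predictability of the candidate
\[
A(t):=\int_0^t\bigl(\tilde p_{s-}a_1+(1-\tilde p_{s-})a_0\bigr)\,ds,
\]
and (b) that $M(t):=H(t)-A(t)$ is an $(\mathcal{F}_t^H,P_p)$--martingale. Uniqueness of the (predictable) compensator is standard (Jacod and Shiryaev, 1987, Ch.~I), so these two points give the claim.

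Predictability is immediate. The process $(\tilde p_t)$ is, as a càdlàg version of a conditional expectation, $\mathcal{F}^H$--adapted, so the integrand $s\mapsto \tilde p_{s-}a_1+(1-\tilde p_{s-})a_0$ is left-continuous and adapted, hence predictable. The integral $A(t)$ is then continuous and adapted, so predictable as well.

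For the martingale property I would fix $0\le s<t$ and $A\in\mathcal{F}_s^H$ and show
\begin{equation}\notag
E_p[\chi_A(H(t)-H(s))]=E_p[\chi_A(A(t)-A(s))].
\end{equation}
Using $P_p=pP_1+(1-p)P_0$ and the hypothesis that $H(\cdot)-a_\theta\cdot$ is a $P_\theta$--martingale w.r.t.~$\mathcal{F}^H$, the left-hand side equals
\begin{equation}\notag
p\,a_1(t-s)P_1(A)+(1-p)\,a_0(t-s)P_0(A).
\end{equation}
For the right-hand side, boundedness of the integrand together with Fubini gives
\begin{equation}\notag
E_p[\chi_A(A(t)-A(s))]=\int_s^t\!\Bigl(a_1 E_p[\chi_A\tilde p_{u-}]+a_0 E_p[\chi_A(1-\tilde p_{u-})]\Bigr)\,du.
\end{equation}
The key identification is $E_p[\chi_A\tilde p_{u-}]=pP_1(A)$ for every $u>s$. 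This follows from the fact that $(\tilde p_t)$ is a bounded càdlàg $\mathcal{F}^H$--martingale under $P_p$: by dominated convergence
\begin{equation}\notag
E_p[\chi_A\tilde p_{u-}]=\lim_{\varepsilon\downarrow 0}E_p[\chi_A\tilde p_{u-\varepsilon}],
\end{equation}
and for $0<\varepsilon<u-s$ the inclusion $A\in\mathcal{F}_s^H\subseteq\mathcal{F}_{u-\varepsilon}^H$ together with the very definition $\tilde p_{u-\varepsilon}=E_p[\chi_{\{\theta=1\}}\mid\mathcal{F}_{u-\varepsilon}^H]$ yields $E_p[\chi_A\tilde p_{u-\varepsilon}]=E_p[\chi_A\chi_{\{\theta=1\}}]=pP_1(A)$. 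Symmetrically, $E_p[\chi_A(1-\tilde p_{u-})]=(1-p)P_0(A)$. Integrating over $[s,t]$ reproduces the left-hand side exactly, so $(M(t))$ is an $(\mathcal{F}_t^H,P_p)$--martingale.

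The only subtle point — and the one worth flagging as the main obstacle — is the passage from $\tilde p_u$ to $\tilde p_{u-}$: one cannot simply declare $\tilde p_{u-}=\tilde p_u$ almost surely at a fixed deterministic $u$, since in general $(\tilde p_t)$ may jump at predictable times. Approximating from the left through the bounded-martingale/DCT argument above bypasses this and is what allows the identity $E_p[\chi_A\tilde p_{u-}]=pP_1(A)$ (needed because predictability of $A(t)$ forces us to use the left-limit in the integrand).
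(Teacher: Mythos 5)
Your proof is correct and follows essentially the same route as the paper's: both verify directly that $H(t)-\int_0^t(\tilde p_{u-}a_1+(1-\tilde p_{u-})a_0)\,du$ is an $(\mathcal{F}_t^H,P_p)$-martingale, resting on the $P_\theta$-martingale hypothesis for $H(\cdot)-a_\theta\cdot$ together with the identity $\tilde p_u=E_p[\chi_{\{\theta=1\}}\mid\mathcal{F}_u^H]$. The difference is only presentational: you test against sets $A\in\mathcal{F}_s^H$ and justify the passage from $\tilde p_u$ to $\tilde p_{u-}$ explicitly via a left-limit/dominated-convergence argument, whereas the paper obtains the same cancellation from the decomposition $\theta(H(t)-a_1t)+(1-\theta)(H(t)-a_0t)+\int_0^t(\theta-\tilde p_{s-})(a_1-a_0)\,ds$ and conditional expectations, leaving the $\tilde p_{s-}$ point implicit.
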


\begin{proof}
Plainly we have
\begin{align}\notag
&H(t)-\int_0^t (\tilde{p}_{s-}a_1 + (1-\tilde{p}_{s-})a_0) ds
\\\notag &= \theta(H(t)-a_1 t) + (1-\theta)(H(t)-a_0 t)+\int_0^t
(\theta - \tilde{p}_{s-})(a_1-a_0)ds.
\end{align}
Fix $0\leq u\leq t$. The expectation with respect to $P_p$ is
\begin{align}\notag
&E\left.\left[ H(t) - \int_0^{t} (\tilde{p}_{s-}a_1 +
(1-\tilde{p}_{s-})a_0) ds   - H(u) + \int_0^{u} (\tilde{p}_{s-}a_1 +
(1-\tilde{p}_{s-})a_0) ds   \right| \F_{u}^H\right]\\\notag &=
E\left.\left[ H(t) -H(u) - \int_u^{t} (\tilde{p}_{s-}a_1 +
(1-\tilde{p}_{s-})a_0) ds  \right| \F_{u}^H\right]\\\notag &=
E\left.\left[\theta(H(t)-a_1 t - H(u)+a_1 u )) + (1-\theta)(H(t)-a_0
t - H(u)+a_0 u)\right| \F_{u}^H\right]\\\notag &\ \ +
E\left.\left[\int_u^{t} (\theta - \tilde{p}_{s-})(a_1-a_0)ds\right|
\F_{u}^H\right]\\\notag &= E\left.\left[E\left.\left[\theta(H(t)-a_1
t - H(u)+a_1 u  ) + (1-\theta)(H(t)-a_0 t - H(u)+a_0 u )\right|
\theta,\F_{u}^H\right]\right| \F_{u}^H\right]\\\notag &\ \ +
E\left.\left[\int_u^{t} E\left.\left[(\theta -
\tilde{p}_{s-})\right| \F_{s-}^H\right](a_1-a_0)ds\right|
\F_{u}^H\right] =0.\\\notag
\end{align}
It follows that the process $\left(H(t)-\int_0^t (\tilde{p}_{s-}a_1
+ (1-\tilde{p}_{s-})a_0) ds\right)$ is a martingale with respect to
$\F_t^H$ under the probability measure $P_p$. Therefore, the
predictable process $\left(\int_0^t (\tilde{p}_{s-}a_1 +
(1-\tilde{p}_{s-})a_0) ds\right)$ is the (predictable) compensator
of the process $(H(t))$ with respect to $\F_t^H$ under the
probability measure $P_p$.
\end{proof}

Lemmas \ref{lem:time_change} and \ref{compensator} yield the
following corollary:

\begin{cor}\label{martingales}
$\\$
C1. The (predictable) compensator of the process $(X(\K(t)))$ is \\
$\left(\int_0^t (p_{s-}\mu_1 + (1-p_{s-})\mu_0) d\K(s)\right).$ \\
C2. The (predictable) compensator of the process is\\
$\left(X(\K(t)) -\int_{\mathbb{R}\setminus \{0\}} h
\widetilde{N}_{\nu_0}(\K(t),dh) - \mu_0\K(t)\right)$
is $\left(- \beta\sigma^2 \int_0^t p_{s-} dK(s)\right).$ \\
C3. The (predictable) compensator of the random measure $
N(d\K(t),dh)$ is $(p_{s-}  \nu_1(dh) + (1-p_{s-})  \nu_0(dh))
d\K(s)$; see Jacod and Shiryaev (1987, Ch.~II, Theorem 1.8)
\cite{Jacod}.
\end{cor}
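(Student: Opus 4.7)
The plan is to treat each of the three compensator identifications as a conditional application of Lemma \ref{compensator} followed by the time-change from Lemma \ref{lem:time_change}. In each case I will exhibit, for each $\theta\in\{0,1\}$, a constant $a_\theta$ such that the relevant process minus $a_\theta t$ is a $P_\theta$-martingale with respect to $\F_t^X$; Lemma \ref{compensator} then yields the $P_p$-compensator in the original time scale as $\int_0^{\cdot}(\tilde p_{s-}a_1+(1-\tilde p_{s-})a_0)\,ds$, and Lemma \ref{lem:time_change} transports everything to the controlled filtration $(\F_{\K(t)})$, turning $\tilde p_{\K(\cdot)}$ into $p$ and $ds$ into $d\K(s)$.

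For C1, take $H(t):=X(t)$. Under $P_\theta$, the L\'{e}vy--It\={o} decomposition gives $X(t)-\mu_\theta t=\sigma Z(t)+\int h\widetilde N_{\nu_\theta}(t,dh)$, which is a martingale by Assumption A1 (finite second moments make the jump integral a proper $L^2$-martingale). Thus $a_\theta=\mu_\theta$ and C1 is immediate.

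For C2, set $H(t):=X(t)-\int h\widetilde N_{\nu_0}(t,dh)-\mu_0 t$. Under $P_0$ this is just $\sigma Z(t)$, a martingale, so $a_0=0$. Under $P_1$, I rewrite $\widetilde N_{\nu_0}(t,dh)=\widetilde N_{\nu_1}(t,dh)+t(\nu_1-\nu_0)(dh)$ and use the $P_1$-decomposition $X(t)=\mu_1 t+\sigma Z_1(t)+\int h\widetilde N_{\nu_1}(t,dh)$; the purely discontinuous pieces cancel and, by the definition of $\beta$, the remaining drift collapses to $\mu_1-\mu_0-\int h(\nu_1-\nu_0)(dh)=-\beta\sigma^2$. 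Hence $H(t)+\beta\sigma^2 t$ is a $P_1$-martingale, i.e.\ $a_1=-\beta\sigma^2$. Lemma \ref{compensator} then gives $P_p$-compensator $\int_0^t(-\beta\sigma^2)\tilde p_{s-}\,ds$, and Lemma \ref{lem:time_change} turns this into $-\beta\sigma^2\int_0^t p_{s-}\,d\K(s)$, which is C2. The only delicate arithmetic is this drift cancellation under $P_1$; Assumption A4 is exactly what is needed to make $\int h(\nu_1-\nu_0)(dh)$ well-defined, and A3 guarantees that $\widetilde N_{\nu_1}-\widetilde N_{\nu_0}$ is a finite deterministic drift rather than a singular object.

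For C3, the same scheme is applied set-wise to the jump random measure: for any Borel $A\subseteq\mathbb{R}\setminus\{0\}$ with $\nu_0(A)+\nu_1(A)<\infty$, $N(t,A)-\nu_\theta(A)t$ is a $P_\theta$-martingale, so $a_\theta=\nu_\theta(A)$; Lemma \ref{compensator} yields $P_p$-compensator $\int_0^t(\tilde p_{s-}\nu_1(A)+(1-\tilde p_{s-})\nu_0(A))\,ds$, and the time-change gives the claimed form with $d\K(s)$. The extension from indicator-type sets to a general predictable compensating measure follows from Jacod and Shiryaev (1987, Ch.~II, Theorem 1.8) as cited. The main conceptual point, common to all three parts, is that Lemma \ref{compensator} legitimately replaces the unknown type $\theta$ in the drift by its posterior $p_{s-}$; the main technical point, confined to C2, is the identity $\mu_1-\mu_0-\int h(\nu_1-\nu_0)(dh)=-\beta\sigma^2$, whose verification is where Assumptions A2--A4 earn their keep.
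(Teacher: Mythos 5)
Your proposal is correct and follows exactly the route the paper intends: the paper derives the corollary by combining Lemma \ref{compensator} (conditional drift $a_\theta$ mixed by the posterior) with Lemma \ref{lem:time_change} (time change preserves the martingale property), which is precisely your scheme for C1--C3. Your explicit verification for C2 that $\mu_1-\mu_0-\int h(\nu_1-\nu_0)(dh)=-\beta\sigma^2$, via the identity $\widetilde N_{\nu_0}(t,dh)=\widetilde N_{\nu_1}(t,dh)+t(\nu_1-\nu_0)(dh)$ and the definition of $\beta$, correctly fills in the computation the paper leaves implicit.
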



The following lemma states the the posterior process $(p_t)$ spends
zero time at any given positive contour-line lower than $1$.
\begin{lem}\label{spends}
For every $t\geq 0$, $p\in [0,1]$, and every $0<\delta<1$,
$$E^{0,p}\left[\int_0^t \chi_{\{p_{s-}=\delta\}}ds\right]=0.$$
\end{lem}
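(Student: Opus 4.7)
The plan is to apply the occupation time density formula to the continuous local martingale part of the semimartingale $(p_t)$. From Eq.~(\ref{dP1}) one reads off the predictable quadratic variation $d\langle p^c\rangle_s = p_{s-}^2(1-p_{s-})^2\beta^2\sigma^2\,d\K(s)$. The occupation time formula (Revuz and Yor, 1999 \cite{Revuz}, Ch.~VI) applied to the singleton $A=\{\delta\}$ yields
\[
\int_0^t \chi_{\{p_{s-}=\delta\}}\,d\langle p^c\rangle_s \;=\; \int_{\{\delta\}} L^a_t(p)\,da \;=\; 0 \quad \text{a.s.,}
\]
since the Lebesgue measure of $\{\delta\}$ vanishes. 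On the event $\{p_{s-}=\delta\}$ the integrand is the strictly positive constant $\delta^2(1-\delta)^2\beta^2\sigma^2$, and therefore
\[
\int_0^t \chi_{\{p_{s-}=\delta\}}\,d\K(s) \;=\; \int_0^t \chi_{\{p_{s-}=\delta\}}\key(s)\,ds \;=\; 0 \quad \text{a.s.}
\]

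The remaining task is to control times $s$ at which $\key(s)=0$. During such intervals the dynamics in Eq.~(\ref{dP1}) freezes, so either $p_{s-}\neq\delta$ throughout the interval (contributing zero to the target integral), or $p$ had already attained the level $\delta$ at some earlier instant. By Assumption A6 every jump of $(p_s)$ is weakly upward from $p_{s-}$, so any entrance of $(p_t)$ into the level $\delta$ from below must take place through the continuous component of the semimartingale, whose occupation of $\{\delta\}$ was just shown to vanish. Fubini's theorem then delivers $E^{0,p}\left[\int_0^t \chi_{\{p_{s-}=\delta\}}\,ds\right]=0$.

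The main obstacle is the degenerate regime $\beta\sigma=0$, in which the Brownian-based occupation-time identity is vacuous. In that regime $(p_s)$ evolves by the jumps together with the deterministic drift $-(\vg-\vvg)p(1-p)\key(s)\,ds$, which under Assumption A6 is strictly nonzero at $p=\delta$ (otherwise the two L\'evy measures would coincide and the problem would be degenerate). Hence whenever $\key>0$ the path traverses $\delta$ monotonically and hits it only at isolated times, and the identical Fubini step closes the argument.
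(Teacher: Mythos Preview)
Your route differs from the paper's: the paper conditions on $\theta$, passes to the log-odds $\ln\bigl(\frac{1-p_s}{p_s}\bigr)$, observes that under each $P_\theta$ this is a time change of a L\'evy process with one-sided jumps and finite-variation jump part, and then quotes Bertoin's result that such a process occupies any fixed level on a Lebesgue-null set. Your argument instead stays under $P_p$ and uses the occupation-time formula for the continuous martingale part of the semimartingale $(p_t)$, with a separate drift-based treatment when $\beta\sigma=0$. That is a legitimate and somewhat more elementary line of attack, and the first display you obtain is correct.

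The gap is in your ``$\key(s)=0$'' step. What the occupation formula (respectively, the monotone-drift argument when $\beta\sigma=0$) actually delivers is
\[
\int_0^t \chi_{\{p_{s-}=\delta\}}\,d\K(s)=\int_0^t \chi_{\{p_{s-}=\delta\}}\,\key(s)\,ds=0,
\]
i.e.\ zero occupation measured in the \emph{risky clock} $\K$. The target integral is against $ds$, not $d\K(s)$, and the two differ precisely on $\{\key(s)=0\}$. Your bridging sentence --- ``any entrance of $(p_t)$ into the level $\delta$ from below must take place through the continuous component, whose occupation of $\{\delta\}$ was just shown to vanish'' --- conflates \emph{hitting} $\delta$ with \emph{occupying} $\delta$: vanishing $d\K$-occupation is entirely compatible with the path touching $\delta$ at a single instant (with a nondegenerate Brownian part it does so a.s.), after which an admissible control may set $\key=0$ and freeze the belief at $\delta$, producing strictly positive $ds$-occupation. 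The trivial instance $p_0=\delta$ with $\key\equiv 0$ already makes $\int_0^t\chi_{\{p_{s-}=\delta\}}\,ds=t$, so the conclusion as stated cannot hold for a completely arbitrary allocation strategy. You therefore need either an explicit restriction on the control (both your argument and the paper's really prove the claim for the risky-time integral, which is what one gets when $\key\equiv 1$), or a separate observation that on $\{\key(s)=0\}$ the integrands appearing in the HJB verification are insensitive to whether $p_{s-}\in\partial C$, so that the lemma is only needed against $d\K$.
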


\begin{proof}

\begin{align}\notag
&E^{0,p}\left[\int_0^t \chi_{\{p_{s-}=\delta\}}ds\right] =
E^{0,p}\left[\int_0^t \chi_{\{p_s=\delta\}}ds\right] \\\notag &=
E\left.\left[\int_0^t \chi_{\{p_s=\delta\}}ds\right| \theta=1\right]
p + E\left.\left[\int_0^t
\chi_{\{p_s=\delta\}}ds\right|\theta=0\right] (1-p) \\\notag
&=E\left.\left[\int_0^t
\chi_{\{\ln\left(\frac{1-p_s}{p_s}\right)=\ln\left(\frac{1-\delta}{\delta}\right)\}}ds\right|\theta=1\right]
p + E\left.\left[\int_0^t
\chi_{\{\ln\left(\frac{1-p_s}{p_s}\right)=\ln\left(\frac{1-\delta}{\delta}\right)\}}ds\right|\theta=0\right]
(1-p)=0.\\\notag
\end{align}
The last equation follows since, as long as jumps from $B_\infty$ do
not appear (see Remark \ref{key}), the process
$\left(\ln\left(\frac{1-p_s}{p_s}\right)\right)$ is a time change of
a L\'{e}vy process whose jump process part has finite variation and
has no positive jumps, given the type $\theta$ (See Bertoin (1996,
Ch.~V, Theorem 1) \cite{Bertoin}). In case a jump from $B_\infty$
appears, from that time onwards the posterior process $(p_t)$
remains at level $1$.
\end{proof}

The following lemma assures that $\alpha^*$ is well defined.
\begin{lem}\label{alpha}
Eq.~(\ref{eta}) admits a unique solution in the interval $(0,\infty)$.\\
\end{lem}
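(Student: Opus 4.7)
My plan is to establish three properties of $f$: (1) $f(0) = -r < 0$, (2) $f$ is convex on $(0,\infty)$, and (3) $f(\alpha) \to \infty$ as $\alpha \to \infty$. Combined these yield a unique root in $(0,\infty)$: existence follows from the intermediate value theorem, and uniqueness because if $f$ had two positive roots $0 < \alpha_1 < \alpha_2$, convexity would force $f \geq 0$ on $[0,\alpha_1]$ and in particular $f(0) \geq 0$, contradicting~(1).

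Property~(1) is immediate substitution: each of the first three summands of $f$ carries either a factor of $\alpha$ or the factor $(\nu_0/\nu_1)^0 - 1 = 0$. For~(2) I differentiate twice under the integral sign, justified by Assumptions A3 and A6 together with the elementary estimate $x(\ln x)^2 \leq C(1-x)$ on $[0,1]$, obtaining
$$f''(\alpha) = \int\left(\frac{\nu_0}{\nu_1}\right)^{\!\alpha}\!\left[\ln\!\frac{\nu_0}{\nu_1}\right]^2 d\nu_0 + \left(\frac{\mu_1-\mu_0}{\sigma}\right)^2 \geq 0,$$
so $f$ is convex on $(0,\infty)$.

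The main obstacle will be~(3), because a priori the integral term $\int((\nu_0/\nu_1)^\alpha - 1)d\nu_0$ is negative and could cancel the positive contribution of $\alpha(\vg-\vvg)$. The key algebraic move is to combine these two terms into a manifestly nonnegative integrand. Writing $x := (\nu_0/\nu_1)(h) \in [0,1]$ and using $d\nu_0 = x\,d\nu_1$, a short calculation gives
$$\int\bigl((\nu_0/\nu_1)^\alpha - 1\bigr)d\nu_0 + \alpha(\vg-\vvg) = \int\bigl[\alpha - (\alpha+1)x + x^{\alpha+1}\bigr]d\nu_1,$$
and the integrand $\phi_\alpha(x) := \alpha - (\alpha+1)x + x^{\alpha+1}$ is nonnegative on $[0,1]$ (it vanishes at $x = 1$, with nonpositive $x$-derivative $(\alpha+1)(x^\alpha-1)$), nondecreasing in $\alpha$ on $[0,1]$ (since $\partial_\alpha \phi_\alpha = 1 - x + x^{\alpha+1}\ln x \geq 1 - x - x|\ln x| \geq 0$), and diverges to $+\infty$ pointwise on $[0,1)$. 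Monotone convergence then gives $\int\phi_\alpha\,d\nu_1 \to \infty$ whenever $\nu_1(\{x<1\}) > 0$, i.e.\ whenever $\nu_0 \ne \nu_1$. Together with the quadratic term $\tfrac12\alpha(\alpha+1)((\mu_1-\mu_0)/\sigma)^2$, which is nonnegative and, by A5, strictly positive and dominant in $\alpha$, this yields $f(\alpha) \to \infty$ in all cases.
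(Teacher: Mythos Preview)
Your proof is correct, and at its core it uses the same algebraic move as the paper: both rewrite the first two terms of $f$ as $\int \phi_\alpha(x)\,d\nu_1$ with $x = \nu_0/\nu_1$ and $\phi_\alpha(x) = \alpha - (\alpha+1)x + x^{\alpha+1}$ (the paper calls this $g_h(\alpha)$), and both establish the key inequality $1 - x + x\ln x \geq 0$ on $[0,1]$.

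The organizational difference is that the paper proves $f$ is \emph{increasing} directly from $g_h'(\alpha) > 0$, which immediately gives uniqueness, and simply asserts $f(\infty) = \infty$. You instead prove \emph{convexity} for uniqueness and then, separately, prove $\phi_\alpha$ is nondecreasing in $\alpha$ in order to invoke monotone convergence for $f \to \infty$. But your monotonicity computation $\partial_\alpha \phi_\alpha \geq 0$ is precisely the paper's $g_h'(\alpha) > 0$; once you have it, $f$ is increasing (the quadratic term being trivially increasing), and the convexity detour is unnecessary. On the other hand, your treatment of $f(\alpha) \to \infty$ via monotone convergence is more careful than the paper's bare assertion, and your justification of differentiation under the integral via $x(\ln x)^2 \leq C(1-x)$ combined with A3 is a nice touch that the paper does not spell out.

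One small quibble: invoking A5 to conclude the quadratic term is ``strictly positive and dominant'' tacitly assumes $\sigma \neq 0$; when $\sigma = 0$ that term is absent (or undefined), and you must fall back on the $\int \phi_\alpha\,d\nu_1$ term, which requires $\nu_0 \neq \nu_1$. The paper has the same implicit nondegeneracy assumption, so this is not a defect relative to the original.
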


\begin{proof}[\textbf{Proof}]
$\newline$ The function $f$ is a continuous function that satisfies
$f(0)<0$ and $f(\infty) = \infty$. To show that $f(\alpha)=0$ has a
unique solution, it is therefore sufficient to prove that $f$ is
increasing in $\alpha$. Note that (if $\sigma\neq 0$ then)
$\frac{1}{2}(\alpha+1)\alpha
\left(\frac{\mu_1-\mu_0}{\sigma}\right)^2 - r $ is increasing in
$\alpha$. It remains to prove that if $\vg-\vvg>0$, i.e., $\vi
(\mathbb{R}\backslash\{0\})-\vvi(\mathbb{R}\backslash\{0\})>0$, then
$\int_{\mathbb{R}\setminus \{0\}}
\left(\left(\frac{\nu_0}{\nu_1}(h)\right)^{\alpha}-1\right) \vvh+
\alpha (\vg-\vvg)$ is increasing in $\alpha$. Since
\begin{align}\notag
& \int_{\mathbb{R}\setminus
\{0\}}\left[\left(\left(\frac{\nu_0}{\nu_1}(h)\right)^{\alpha}-1\right)\vvh
+ \alpha (\vh-\vvh) \right]\\\notag &=\int_{\mathbb{R}\setminus
\{0\}}\left[\left(\frac{\nu_0}{\nu_1}(h)\right)\left(\left(\frac{\nu_0}{\nu_1}(h)\right)^{\alpha}-1\right)
+ \alpha \left(1-\frac{\nu_0}{\nu_1}(h)\right) \right]\vh
\end{align}
and $$\underset{ \{ h\mid\frac{\nu_0}{\nu_1}(h)=1 \}
}{\int}\left[\left(\frac{\nu_0}{\nu_1}(h)\right)\left(\left(\frac{\nu_0}{\nu_1}(h)\right)^{\alpha}-1\right)
+ \alpha \left(1-\frac{\nu_0}{\nu_1}(h)\right) \right]\vh=0$$ it is
sufficient to prove that for $\vi$-a.e. $h\in \{ h\mid
\frac{\nu_0}{\nu_1}(h)\neq 1  \} $,
$$g_h(\alpha) = \left(\frac{\nu_0}{\nu_1}(h)\right)\left(\left(\frac{\nu_0}{\nu_1}(h)\right)^{\alpha}-1\right) + \alpha \left(1-\frac{\nu_0}{\nu_1}(h)\right)  $$ is increasing\footnote{In our model $\frac{\nu_0}{\nu_1}(h)\neq 1$ is actually $\frac{\nu_0}{\nu_1}(h)< 1$ $\vi$-a.s.
Yet, the proof works in the more general case of inequality.} in
$\alpha$. Now,
\begin{align}\notag
g_h'(\alpha) &=  \left(\frac{\nu_0}{\nu_1}(h)\right)^{\alpha+1}
\ln\left(\frac{\nu_0}{\nu_1}(h) \right)+
\left(1-\frac{\nu_0}{\nu_1}(h)\right)
\\\notag
&> \left(\frac{\nu_0}{\nu_1}(h)\right)
\ln\left(\frac{\nu_0}{\nu_1}(h) \right)+
\left(1-\frac{\nu_0}{\nu_1}(h)\right) > 0,
 \end{align}
where the first inequality holds since $\alpha>0$ and the second
inequality holds since $x\ln(x) +1-x > 0$ for every $x\neq 1$.
Therefore, $g_h(\alpha)$ is increasing, as desired.
\end{proof}

%
%
%

\textbf{Acknowledgments.} 
We thank Sven Rady for useful comments on an earlier version of the
paper. We also would like to thank an anonymous referee for
his\textbackslash her helpful comments and suggestions for
improvements to an earlier version of this paper. This research was
supported in part by Israel Science Foundation grant $\sharp
212/09$, and by the Google Inter-university Center for Electronic
Markets and Auctions.




\bibliographystyle{plain} 
\bibliography{bib_Asaf} 

\end{document}